\theoremstyle{definition}
\newtheorem{proposition}{Proposition}[]
\newtheorem{lemma}{Lemma}[]
\newtheorem{remark}{Remark}[] 
\newtheorem{assumption}{Assumption}[]
\title[Two-sided Poisson control of diffusions]{Two-sided Poisson control of linear diffusions}
\author{Harto Saarinen}
\address{Harto Saarinen, Department of Mathematics and Statistics, University of Turku, FI - 20014 Turun Yliopisto, Finland, \texttt{hoasaa@utu.fi}}
\date{\today, \currenttime}
\begin{document}

\begin{abstract}
We study a class of two-sided optimal control problems of general linear diffusions under a so-called Poisson constraint: the controlling is only allowed at the arrival times of an independent Poisson signal processes. We give a weak and easily verifiable set of sufficient conditions under which we derive a quasi-explicit unique solution to the problem in terms of the minimal $r$-excessive mappings of the diffusion. We also investigate limiting properties of the solutions with respect to the signal intensity of the Poisson process. Lastly, we illustrate our results with an explicit example.

\smallskip
\noindent \textbf{Keywords.} Two-sided control, Linear diffusion, Resolvent operator, Poisson process 

\smallskip
\noindent \textup{2010} \textit{Mathematics Subject Classification}: \textup{93E20, 60J60}

\end{abstract}

\maketitle

\section{Introduction}

In a classical stochastic singular control problems the objective is to maximise the expected discounted cumulative yield, given by a function of a stochastic process, and the maximisation takes place over controls that can be continuously applied. In these problems, the controlling can be allowed only downwards or upwards (one-sided problems) or both (two-sided problems) depending on the application. Both one-sided and two-sided singular problems have been studied extensively due to their mathematical attractiveness and applicability in various fields. These include, for example, a reversible or irreversible investment problems (cf. \cite{Oksendal2000, ChiarollaHaussman2005, GuoPham2005}), where the controls can be interpreted as investor purchasing capital and possibly selling it, and rational harvesting (cf. \cite{LunguOksendal1997, AlvarezHening2019}), where the controls can be seen as harvesting and replanting.

In Poisson optimal control problems, also called constrained control problems (these terminologies are from \cite{Lange2020} and \cite{MenaldiRobin2017}), the potential control opportunities are restricted to jump times of an independent Poisson process. The main motivation for introducing Poisson control problem is that in applications it is not often possible to apply the control continuously. For example, in \cite{RogersZane2002} and \cite{Matsumoto2006} due to liquidity effects when trading financial assets and in \cite{Lempa2014} the possibility of modelling imperfect flow of information is considered. Nowadays the literature on Poisson type control problems is quite extensive. Some examples include optimal stopping \cite{DupuisWang2002, Lempa2012}, stopping games \cite{LiangSun2019, LiangSun2020}, ergodic control \cite{Wang2001, LempaSaarinen2021}, optimal switching \cite{LiangWei2016}, extensions to inhomogeneous Poisson processes \cite{HobsonZeng2019, Hobson2021} and more general signal processes \cite{MenaldiRobin2017, MenaldiRobin2016, MenaldiRobin2018}. 

In the classical singular control problem when the controlling is costless the optimal policy is often a local time type reflecting barrier policy. The effect of introducing the Poisson restriction to control opportunities has usually similar effect on the optimal strategy, although for different reasons as introducing a constant transaction cost to the model. Indeed, when the controlling is costly the optimal strategy is a sequential impulse control, where the decision maker chooses a sequence of stopping times $\{ \tau_1, \tau_2, \ldots \}$ to exert the control and corresponding impulse sizes $\{ \zeta_1, \zeta_2, \ldots \}$, see \cite{AlvarezLempa2008, Alvarez2004, HarrisonSellkeTaylor1983}. A similar strategy is optimal in the Poisson control problems, but the possible intervention times are restricted exogenously by the signal process \cite{Wang2001, Lempa2014, LempaSaarinen2021}.

We extend the Poisson control framework in the following way. Our problem setting is closely related to \cite{Wang2001}, \cite{Matomaki2012} and \cite{Lempa2014}. Similarly to \cite{Matomaki2012} and \cite{Lempa2014}, we assume that the underlying dynamics follow a general one-dimensional diffusion. In \cite{Wang2001}, the Poisson control problem was first introduced in the literature by considering controlling of standard Brownian motion with quadratic payoff at the jump times of independent Poisson process. These results were then extended to a more general underlying and payoff structure by \cite{Lempa2014}, where one-sided problem is considered. In \cite{Matomaki2012}, a two-sided singular control problem is solved using the techniques from classical theory of linear diffusions and $r$-excessive mappings, which lead to quasi-explicit solution. Our study extends the findings of \cite{Lempa2014} by considering two-sided Poisson control policies using similar, rather easily verifiable set of assumptions as in \cite{Matomaki2012}. Further, our results can also be seen as a generalization to those of \cite{Matomaki2012}, since the results coincide when the Poisson arrival rate approaches infinity. To the best of our knowledge these results are new.

The remainder of the study is organized as follows. In Section 2 we set up the underlying dynamics and formulate the two-sided Poisson control problem. Main assumptions and auxiliary calculations are done in Section 3, whereas in Section 4 we derive a candidate solution for the problem and verify its optimality. Asymptotic results connecting the problem to the singular problem are proved in Section 5. The study is concluded by considering explicit example in Section 6.

\section{Underlying dynamics and problem setting}

Let $(\Omega, \mathcal{F}, \{\mathcal{F}_t\}_{t \geq 0}, \mathbb{P})$ be a filtered probability space that satisfies the usual conditions. We consider an uncontrolled process $X_t$ defined on $(\Omega, \mathcal{F}, \{\mathcal{F}_t\}_{t \geq 0}, \mathbb{P})$, which lives in $\mathbb{R}_+$, and is given as a strong solution to a regular Itô diffusion
\begin{equation*}
dX_t=\mu(X_t)dt+ \sigma(X_t)dW_t, \quad \quad X_0 = x,
\end{equation*}
where $W_t$ is the Wiener process and the functions $\mu: \mathbb{R}_+ \to \mathbb{R}$ and $\sigma:\mathbb{R}_+ \to \mathbb{R}_+$ are sufficiently smooth (see e.g. \cite{KaratzasShreve1991} chapter 5). The boundaries of the state space $\mathbb{R}_+$ are assumed to be natural. Even though we consider the case where the process evolves in $\mathbb{R}_+$, we remark that the results would remain the same with obvious changes even if the state space would be replaced with any interval. 

As usual, we define the second-order linear differential operator $\mathcal{A}$ which represents the infinitesimal generator of the diffusion $X$ as
\begin{equation*}
\mathcal{A} = \mu(x) \frac{d}{dx} + \frac{1}{2} \sigma^2(x)\frac{d^2}{dx^2},
\end{equation*}
and for a given $r > 0$ we respectively denote the increasing and decreasing solutions to the differential equation $(\mathcal{A}-r)f=0$ by $\psi_r > 0$ and $\varphi_r > 0$. These solutions are called \emph{fundamental solutions} or \emph{minimal excessive mappings} (\cite{BorodinSalminen} p.19, p.33).

 For $r>0$, we denote by $\mathcal{L}_1^r$ the set of functions $f$ on $\mathbb{R}_+$, which satisfy the integrability condition 
 \begin{equation*}
     \mathbb{E}_x \bigg[ \int_0^{\infty}e^{-r s} |f(X_s)| ds \bigg] < \infty.
 \end{equation*}
For any $f \in \mathcal{L}_1^r$, we define the functional $(R_r f): \mathbb{R}_+ \to \mathbb{R}$ by
\begin{equation*}
(R_r f)(x)=\mathbb{E}_x \bigg[ \int_0^{\infty}e^{-r s} f(X_s) ds \bigg].
\end{equation*}
This functional, called the \emph{resolvent}, is the inverse of the differential operator $r-\mathcal{A}$.

Also define the scale density of the diffusion by
\begin{equation*}
S'(x) = \exp \bigg( - \int^x \frac{2 \mu(z)}{\sigma^2(z)}dz \bigg),
\end{equation*}
which is the derivative of the monotonic (and non-constant) solution to the differential equation $\mathcal{A} S=0$. Given the scale density and the fundamental solutions, the resolvent $(R_r f)(x)$ can be re-expressed as (\cite{BorodinSalminen} p.19)
\begin{align*}
    (R_r f)(x) =  B_r^{-1} \varphi_r(x)\int_0^{x} \psi_r(y)f(y)m'(y)dy 
      +  B_r^{-1} \psi_r(x)\int_x^{\infty} \varphi_r(y) f(y) m'(y) dy,
\end{align*}
where 
\begin{equation*}
B_r = \frac{\psi_r'(x)}{S'(x)} \varphi_r(x) -\frac{\varphi_r'(x)}{S'(x)} \psi_r(x)
\end{equation*}
is the constant Wronskian (does not depend on $x$) and 
\begin{equation*}
m'(x) = \frac{2}{\sigma^2(x)S'(x)}
\end{equation*}
is the density of the speed measure. We also recall the resolvent equation (\cite{BorodinSalminen} p. 4)
\begin{equation} \label{Eq:ResolventEquation} 
R_q R_r= \frac{R_r-R_q}{q-r},
\end{equation}
where $q>r>0$.

Having setup the underlying dynamics we next describe the control problem. We study a maximization problem of the expected value of the cumulative payoff when the controlling of $X$ is allowed only at the jump times of a signal process $N$.
\begin{assumption}
    The process $N$ is assumed to be Poisson process with parameter $\lambda$, that is independent of $X$. Further, the filtration $\{\mathcal{F}_t\}_{t \geq 0}$ is augmented such that it is rich enough to carry the Poisson process.
\end{assumption}
We call a control policy $(\zeta^d_t, \zeta^u_t)$ admissible if both processes are non-negative, non-decreasing, right-continuous and can be represented as
\begin{equation*}
    \zeta^d_t = \int_0^{t-} \eta^d_s dN_s, \, \, \quad \zeta^u_t= \int_0^{t-} \eta^u_s dN_s,
\end{equation*}
where $\eta^d$ and $\eta^u$ are $\{F_t\}$-predictable processes. Thus, the controlled process $X^{\zeta}_t$ is given by 
\begin{equation*}
    X^{\zeta}_t = x + \int_0^t \mu(X_s)ds + \int_0^t \sigma(X_s)dW_s - \zeta^d_t + \zeta^u_t.
\end{equation*}
Denote
\begin{equation*} 
J(x, \zeta) = \mathbb{E}_x \bigg[ \int_0^{\infty} e^{-rs} \big( \pi(X^{\zeta}_s)ds +\gamma_d d\zeta_d - \gamma_u d\zeta_u \big) \bigg].
\end{equation*}
Then the optimal control problem is to find the optimal value function
\begin{equation} \label{Eq:TheValueFunction}
V(x) = \sup_{(\zeta_d, \zeta_u)} J(x, \zeta),
\end{equation}
where the supremum is taken over all admissible controls and $\gamma_d$ and $-\gamma_u$ are constants, called the unit price and unit cost, respectively. The aim is also to characterize semi-explicitly the optimal control policy $(\zeta_d^*, \zeta_u^*)$ that realizes the supremum in (\ref{Eq:TheValueFunction}).

We end this section by stating useful bounds on the value function that hold in given regions of the state space. The lemma follows directly from corollary 2.4 of \cite{Lempa2014}.
\begin{lemma} \label{Lemma:ValueFunctioBounds}
\hspace{1cm}
\begin{enumerate}[$(i)$]
    \item  If there exists an interval $(b,\infty)$ of the state space $\mathbb{R}_+$ such that it is always sub-optimal to use the control $\zeta^u_t$, the value function satisfies 
    \begin{equation*}
        V(x) \leq (R_{r+\lambda}\pi_{\gamma_d})(x)+ \frac{\lambda}{r} \sup_{x \in \mathbb{R}_+} (R_{r+\lambda}\theta_{d})(x)
    \end{equation*}
    when $x > b$.
    \item If there exists an interval $(0,a)$ of the state space $\mathbb{R}_+$, such that it is always sub-optimal to use the control $\zeta^d_t$, the value satisfies
    \begin{equation*}
        V(x) \geq (R_{r+\lambda}\pi_{\gamma_u})(x)+ \frac{\lambda}{r} \inf_{x \in \mathbb{R}_+} (R_{r+\lambda}\theta_{u})(x)
    \end{equation*}
    when $x < a$.
\end{enumerate}
\end{lemma}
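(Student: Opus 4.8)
The plan is to prove both estimates by reducing, on the region where one of the two controls is inert, the two-sided problem to a one-sided Poisson control problem of the type solved in \cite{Lempa2014}, and then invoking Corollary 2.4 there. In case $(i)$ the relevant one-sided problem is the downward-only one, and in case $(ii)$ it is the upward-only one; the functions $\pi_{\gamma_d},\theta_d$ (resp.\ $\pi_{\gamma_u},\theta_u$) are exactly the data appearing in that corollary, so once the reduction is in place there is nothing left to compute.

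For $(i)$, I would fix $x>b$ and use the standing hypothesis that increasing $\zeta^u$ is never optimal while $X^{\zeta}$ lies in $(b,\infty)$: this lets one compute $V(x)$ as a supremum over admissible pairs whose upward component is frozen until the controlled process first leaves $(b,\infty)$. Along such policies the dynamics and the integrand of $J(x,\zeta)$ coincide, up to that first exit time, with those of the one-sided downward Poisson control problem of \cite{Lempa2014}; decomposing $J(x,\zeta)$ at the exit time by the strong Markov property and controlling the pieces by the one-sided value function, one is left with precisely the quantity estimated in Corollary 2.4 of \cite{Lempa2014}, namely $(R_{r+\lambda}\pi_{\gamma_d})(x)+\frac{\lambda}{r}\sup_{y\in\mathbb{R}_+}(R_{r+\lambda}\theta_d)(y)$. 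Since the second term is a constant and the first is evaluated at the running state, the estimate is uniform enough that the excursions of $X^{\zeta}$ below $b$ do not spoil it, and taking the supremum over policies gives the claim. For $(ii)$ I would run the mirror argument: on $(0,a)$ the downward control is inert, so for $x<a$ the admissible class for $V$ contains that of the one-sided upward Poisson control problem of \cite{Lempa2014}, whence $V$ dominates its value function, which Corollary 2.4 of \cite{Lempa2014} minorises by $(R_{r+\lambda}\pi_{\gamma_u})(x)+\frac{\lambda}{r}\inf_{y\in\mathbb{R}_+}(R_{r+\lambda}\theta_u)(y)$.

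I expect the only genuinely delicate step to be this reduction. One must argue carefully that restricting to policies which do not use the ``wrong-direction'' control in the prescribed region leaves the value unchanged in case $(i)$ and does not decrease it in case $(ii)$ — this is exactly the role of the two sub-optimality hypotheses — and that the Markovian splitting at the first exit time is compatible with the state-uniform bounds borrowed from \cite{Lempa2014} (in particular, the excursion of the controlled process out of the region must not reintroduce the discarded control in a way that breaks the estimate). Granting that, the lemma is a direct consequence of Corollary 2.4 of \cite{Lempa2014}, as asserted.
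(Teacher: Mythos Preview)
Your approach is exactly the paper's: the authors simply state that the lemma ``follows directly from corollary 2.4 of \cite{Lempa2014}'', and your reduction to the one-sided Poisson control problem on the region where the opposite control is inert is precisely the mechanism behind that citation. Your identification of the delicate step (ensuring the sub-optimality hypotheses let you restrict the admissible class without loss, and that the Markovian splitting at first exit is compatible with the state-uniform bounds) is accurate, but the paper treats this as absorbed into the reference rather than spelled out.
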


\section{Auxiliary results}

To set up the framework further, we denote $n \in \{d,u\}$, and define the functions $\pi_{\gamma_n}: \mathbb{R}_+ \to \mathbb{R}$ and $g_n: \mathbb{R}_+ \to \mathbb{R}$ as
\begin{align}
&g_n(x)=\gamma_n x-(R_r\pi)(x), \label{Eq:gnDefinition} \\ 
&\pi_{\gamma_n}(x) = \lambda \gamma_n x + \pi(x) \label{Eq:PigammaDefinition}.
\end{align}
Also, define the functions $\theta_n: \mathbb{R}_+ \to \mathbb{R}$ as
\begin{equation*} \label{Eq:ThetaDefinition}
\theta_n(x) = \pi(x) + \gamma_n(\mu(x)-rx).
\end{equation*}
In the literature, the function $\theta_n(x)$ is known as the net convenience yield of holding inventories cf. \cite{Alvarez2001, Dixit1990} and is often in a key role when determining the optimal policies  \cite{Alvarez2001, AlvarezLempa2008, Lempa2014}.

The following lemma gives convenient relationships between the defined auxiliary functions. It can be proved by using the resolvent equation (\ref{Eq:ResolventEquation}) and the harmonicity properties of $(R_r\pi)$.
\begin{lemma} \label{Lemma:LinksBetweenFunctions} Let $r > 0$ and $g_n, \pi_{\gamma_n}, \theta_n \in \mathcal{L}_1^r$. Then
\begin{align*}
& g_n(x) = -(R_r \theta_n)(x) \\
& (R_{r+\lambda} \pi_\gamma)(x) = \lambda (R_{r+\lambda}g)(x) + (R_r\pi)(x) \\
& \lambda (R_{r+\lambda}g)(x) = (R_{r+\lambda}\theta_r)(x)+g(x).
\end{align*}
\end{lemma}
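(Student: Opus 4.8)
The plan is to establish the three identities in order, using the representation of the resolvent as the inverse of $r-\mathcal{A}$, the resolvent equation (\ref{Eq:ResolventEquation}), and the definitions (\ref{Eq:gnDefinition})--(\ref{Eq:PigammaDefinition}). For the first identity, I would start from the definition $g_n(x)=\gamma_n x-(R_r\pi)(x)$ and apply the operator $r-\mathcal{A}$ formally: since $(r-\mathcal{A})(R_r\pi)=\pi$ and $(r-\mathcal{A})(\gamma_n x)=\gamma_n(rx-\mu(x))=-\gamma_n(\mu(x)-rx)$, we get $(r-\mathcal{A})g_n=-\gamma_n(\mu(x)-rx)-\pi(x)=-\theta_n(x)$. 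Applying $R_r$ to both sides and using that $g_n\in\mathcal{L}_1^r$ (so that $g_n=R_r(r-\mathcal{A})g_n$, i.e. $g_n$ is recovered from its "charge"), this yields $g_n=-(R_r\theta_n)$. The one subtlety here is justifying that $g_n$ has the resolvent representation of $-(R_r\theta_n)$ rather than differing from it by an $r$-harmonic term; this is where the natural-boundary assumption and the membership $g_n,\theta_n\in\mathcal{L}_1^r$ are used, since under natural boundaries the only $r$-harmonic function in the relevant integrability/growth class is zero, pinning down the constant. Alternatively, one can bypass the generator manipulation and verify the identity directly from the integral representation of $R_r$ together with the known non-$\mathcal{L}^1$ decomposition $x = (R_r(rx-\mu(x)))(x)$ valid under natural boundaries.

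For the second identity, I would apply $R_{r+\lambda}$ to the definition (\ref{Eq:PigammaDefinition}), $\pi_{\gamma_n}(x)=\lambda\gamma_n x+\pi(x)$, giving $(R_{r+\lambda}\pi_{\gamma_n})=\lambda\gamma_n(R_{r+\lambda}\,\mathrm{id})+(R_{r+\lambda}\pi)$. Now I use the first identity in the form $\gamma_n x = g_n(x)+(R_r\pi)(x)$, so $\gamma_n(R_{r+\lambda}\,\mathrm{id})=(R_{r+\lambda}g_n)+(R_{r+\lambda}R_r\pi)$. The resolvent equation (\ref{Eq:ResolventEquation}) with $q=r+\lambda$ gives $R_{r+\lambda}R_r\pi=\tfrac{1}{\lambda}(R_r\pi-R_{r+\lambda}\pi)$. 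Substituting, the $\lambda$-multiplied term becomes $\lambda(R_{r+\lambda}g_n)+(R_r\pi)-(R_{r+\lambda}\pi)$, and adding the leftover $(R_{r+\lambda}\pi)$ produces exactly $\lambda(R_{r+\lambda}g_n)+(R_r\pi)$, as claimed. (Here one needs $x\mapsto x$ to be handled carefully: rather than writing $R_{r+\lambda}$ of the identity function directly, it is cleaner to keep everything inside $R_{r+\lambda}$ applied to $\pi_{\gamma_n}$ and substitute the pointwise identity $\gamma_n x=g_n(x)+(R_r\pi)(x)$ before splitting, so that only $\mathcal{L}_1^{r+\lambda}$ functions appear.)

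For the third identity, I would combine the previous two. From identity one, $g_n=-(R_r\theta_n)$, so by the resolvent equation $R_{r+\lambda}g_n=-R_{r+\lambda}R_r\theta_n=-\tfrac{1}{\lambda}(R_r\theta_n-R_{r+\lambda}\theta_n)=\tfrac{1}{\lambda}(g_n+R_{r+\lambda}\theta_n)$, where in the last step I replaced $-R_r\theta_n$ by $g_n$. Multiplying by $\lambda$ gives $\lambda(R_{r+\lambda}g_n)=(R_{r+\lambda}\theta_n)+g_n$, which is the asserted relation (the subscript $\theta_r$ in the statement being a typo for $\theta_n$). I expect the main obstacle to be purely expository rather than mathematical: namely, phrasing the first identity so that the generator computation and the "no nonzero $r$-harmonic function in the integrability class" fact are invoked cleanly under the standing natural-boundary assumption, since everything after that is mechanical bookkeeping with (\ref{Eq:ResolventEquation}). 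Throughout, the integrability hypotheses $g_n,\pi_{\gamma_n},\theta_n\in\mathcal{L}_1^r$ (which also give membership in $\mathcal{L}_1^{r+\lambda}$ since $r+\lambda>r$) guarantee that every resolvent appearing is well defined and that the resolvent equation applies.
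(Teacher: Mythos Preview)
Your proposal is correct and follows precisely the route the paper indicates: the paper itself only states that the lemma ``can be proved by using the resolvent equation (\ref{Eq:ResolventEquation}) and the harmonicity properties of $(R_r\pi)$,'' and your argument is a faithful fleshing-out of that hint, using $(r-\mathcal{A})(R_r\pi)=\pi$ for the first identity and the resolvent equation for the remaining two. The only minor addition in your write-up is the explicit remark about excluding an $r$-harmonic remainder via the natural-boundary and integrability assumptions, which is a welcome clarification rather than a deviation.
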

We now collect the main assumptions that are needed to prove that the solution to the control problem is well-defined and unique.
\begin{assumption} \label{Ass:MainAssumptions} Assume that:
\begin{enumerate}[(i)]
\item $\gamma_d < \gamma_u$, 
\item the functions $\theta_n$ and id: $x \mapsto x$ are in $\mathcal{L}_1^r$,
\item the payoff $\pi$ is positive, continuous and non-decreasing,
\item $\mu'(x)<r$,
\item there is a unique state $x^*_n \geq 0$ such that $\theta_n'(x) \, \substack{> \\ = \\ <} \, 0$ when $x  \, \substack{< \\ = \\ >} \, x^*_n$,
\item $\theta_n$ satisfies the limiting conditions $\lim_{x \to 0+} \theta_n(x) > 0$ and $\lim_{x \to \infty} \theta_n(x) = - \infty$.
\end{enumerate}
\end{assumption}

Some remarks on these assumptions are in order. The converse of the assumption (i) would easily lead to infinite value functions. Item (ii) guarantees sufficient integrability so that the resolvents of the defined functions exist. The assumptions (iii), (v) and (vi) are in key role when proving the existence of the solution. These type of assumptions are quite standard in stochastic control problems, where explicit solutions are desired, see e.g. \cite{Alvarez2001, Matomaki2012, Lempa2014}. The assumption (iv) can be seen as an upper bound for the Lipschitz constant for the coefficient $\mu$, but it also guarantees together with assumption (ii) that the fundamental solutions $\psi_r$ and $\varphi_r$ are convex, see Corollary 1 in \cite{Alvarez2003}.

\begin{remark} \label{Remark:xu xd Ordering}
It follows from the assumptions \ref{Ass:MainAssumptions} (i), (iv) and (v) that $x_u^* < x^*_d$. To see this, note that for $x > x^*_d$
$$
\theta_u'(x) = \pi'(x)+\gamma_u(\mu'(x)-r) \leq \pi'(x) + \gamma_d(\mu'(x)-r) = \theta_d'(x) \leq 0
$$
\end{remark}

Having stated the main assumptions, we define the functionals $L_f^{r}: \mathbb{R}_+ \to \mathbb{R}$ and $K_f^{r}: \mathbb{R}_+ \to \mathbb{R}$ for any $f \in \mathcal{L}^1_r$ as
\begin{align*}
& L_f^{r}(x) = r\int_x^{\infty} f(y) \varphi_{r}(y)m'(y)dy + \frac{\varphi'_{r}(x)}{S'(x)}f(x), \\
& K_f^{r}(x) = r\int_0^{x} f(y) \psi_{r}(y)m'(y)dy - \frac{\psi'_{r}(x)}{S'(x)}f(x),
\end{align*}
and prove some auxiliary results.
\begin{lemma} \label{Lemma:ResolventRepresentationsLandK}
The functionals $L_f^{r+\lambda}$ and $K_f^{r+\lambda}$ have alternative representations
\begin{align*}
    L_f^{r+\lambda}(x) = & \frac{\sigma^2(x)}{2 \lambda S'(x)}[\varphi_{r+\lambda}''(x) \lambda (R_{r+\lambda}f)'(x)-\varphi_{r+\lambda}'(x) \lambda (R_{r+\lambda}f)''(x)] \\
    K_f^{r+\lambda}(x) = & \frac{\sigma^2(x)}{2 \lambda S'(x)} [\psi'_{r+\lambda}(x) \lambda (R_{r+\lambda} f)''(x)- \psi_{r+\lambda}''(x) \lambda (R_{r+\lambda} f)'(x)]
\end{align*}
\end{lemma}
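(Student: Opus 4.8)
The plan is to start from the integral representation of the resolvent recalled in Section 2, differentiate it twice, and then recombine the pieces so that the bracketed expressions on the right-hand sides collapse onto $L_f^{r+\lambda}$ and $K_f^{r+\lambda}$. Throughout I would write $q=r+\lambda$, $u=R_qf$ (which exists since $f\in\mathcal{L}_1^{r}\subseteq\mathcal{L}_1^{q}$), and abbreviate $P(x)=\int_0^x\psi_q(y)f(y)m'(y)\,dy$ and $Q(x)=\int_x^\infty\varphi_q(y)f(y)m'(y)\,dy$, so that $u=B_q^{-1}(\varphi_qP+\psi_qQ)$ with $P'=\psi_qfm'$ and $Q'=-\varphi_qfm'$. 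Since the factors $\lambda$ in the claimed identities cancel, it suffices to prove $L_f^q(x)=\tfrac{\sigma^2(x)}{2S'(x)}\bigl(\varphi_q''(x)u'(x)-\varphi_q'(x)u''(x)\bigr)$ and the analogous formula for $K_f^q$.

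First I would differentiate $u$ once: by the product rule the terms carrying $P'$ and $Q'$ cancel because $\varphi_qP'+\psi_qQ'=\varphi_q\psi_qfm'-\psi_q\varphi_qfm'=0$, leaving $u'=B_q^{-1}(\varphi_q'P+\psi_q'Q)$. Differentiating a second time, the $fm'$-terms no longer vanish but combine into $(\varphi_q'\psi_q-\psi_q'\varphi_q)fm'=-B_qS'(x)f(x)m'(x)=-2f(x)/\sigma^2(x)$, using the definition of the Wronskian $B_q$ together with $S'(x)m'(x)=2/\sigma^2(x)$; hence $u''=B_q^{-1}(\varphi_q''P+\psi_q''Q)-2f/\sigma^2$.

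Next I would form the combination $\varphi_q''u'-\varphi_q'u''$. The $P$-terms cancel, and what remains is $B_q^{-1}(\varphi_q''\psi_q'-\varphi_q'\psi_q'')Q+2f\varphi_q'/\sigma^2$. To simplify the bracket I would insert the ODE $\tfrac12\sigma^2g''+\mu g'-qg=0$ satisfied by $g\in\{\psi_q,\varphi_q\}$, i.e. $g''=(2/\sigma^2)(qg-\mu g')$; the $\mu$-contributions cancel and one is left with $\varphi_q''\psi_q'-\varphi_q'\psi_q''=(2q/\sigma^2)(\varphi_q\psi_q'-\varphi_q'\psi_q)=(2q/\sigma^2)B_qS'$. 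Substituting and multiplying through by $\sigma^2/(2S')$ yields exactly $qQ(x)+\varphi_q'(x)f(x)/S'(x)$, which is $L_f^q(x)$ by definition. The computation for $K_f^q$ is entirely parallel: one instead forms $\psi_q'u''-\psi_q''u'$, now the $Q$-terms cancel and the surviving $P$-terms reproduce $qP(x)-\psi_q'(x)f(x)/S'(x)=K_f^q(x)$.

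I do not expect a genuine obstacle here; the argument is essentially bookkeeping around the Wronskian identity and the defining ODE of $\psi_q,\varphi_q$. The only points needing a word of care are that $R_qf$ is indeed twice continuously differentiable and that $P$ and $Q$ are finite, so that differentiating under the integral sign and the cancellations are legitimate — both follow from $f\in\mathcal{L}_1^{r}$ together with the regularity of the diffusion and the smoothness of $\psi_q,\varphi_q$.
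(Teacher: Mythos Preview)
Your argument is correct and is exactly the natural direct computation one would expect here; the paper itself does not give a self-contained proof but simply refers to Lemma~2 of \cite{Lempa2014} for $L_f^{r+\lambda}$ and says the $K_f^{r+\lambda}$ case is analogous, so your write-up in fact supplies more detail than the paper does.
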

\begin{proof}
The representation of $L_f^{r+\lambda}$ is proven in lemma 2 of Lempa 2014 and the representation of $K_f^{r+\lambda}$ is proven analogously.
\end{proof}

The next two lemmas provide us with the monotonicity properties of functionals related to $L_f^{r+\lambda}$ and $K_f^{r+\lambda}$, when $f=\theta_n$. The first part of the first lemma is an analogue of the proof of lemma 3 in \cite{Lempa2014} and the second part of lemma 3.1 in \cite{AlvarezLempa2008}.
\begin{lemma} \label{Lemma:LandKsigns} Let the assumptions 1 and 2 hold. Then there exists a unique $\hat{x} < x^*_d$ such that
\begin{equation*}
L_{\theta_d}^{r+\lambda}(x) \substack{> \\ = \\ <} 0, \text{ when } x \substack{< \\ = \\ >} \hat{x}
\end{equation*}
and
\begin{equation*}
J'(x) \substack{< \\ = \\ >} 0, \text{ when } x \substack{< \\ = \\ >} \hat{x},
\end{equation*}
where
\begin{equation*}
J(x)= \frac{(R_{r+\lambda}\pi_{\gamma_d})'(x)-\gamma_d}{\varphi_{r+\lambda}'(x)}.
\end{equation*}
Also, there exists a unique $\tilde{x} > x^*_u$ such that
\begin{equation*}
K_{\theta_u}^{r+\lambda}(x) \substack{> \\ = \\ <} 0, \text{ when } x \substack{> \\ = \\ <} \tilde{x}
\end{equation*}
and
\begin{equation*}
I'(x) \substack{> \\ = \\ <} 0, \text{ when } x \substack{> \\ = \\ <} \tilde{x},
\end{equation*}
where
\begin{equation*}
I(x)= \frac{(R_{r+\lambda}\pi_{\gamma_u})'(x)-\gamma_u}{\psi_{r+\lambda}'(x)}.
\end{equation*}
\end{lemma}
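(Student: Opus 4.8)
The plan is to establish the $L_{\theta_d}^{r+\lambda}$ statement first and then obtain the $K_{\theta_u}^{r+\lambda}$ statement by a symmetric argument. Begin with the explicit formula
\[
L_{\theta_d}^{r+\lambda}(x) = (r+\lambda)\int_x^{\infty} \theta_d(y)\varphi_{r+\lambda}(y)m'(y)\,dy + \frac{\varphi_{r+\lambda}'(x)}{S'(x)}\theta_d(x).
\]
Differentiating in $x$ and using $(\mathcal{A}-(r+\lambda))\varphi_{r+\lambda}=0$ together with $\left(\varphi_{r+\lambda}'/S'\right)' = (r+\lambda)\varphi_{r+\lambda}m'$, the first term's derivative cancels part of the product rule on the second term, leaving
\[
\frac{d}{dx}L_{\theta_d}^{r+\lambda}(x) = \frac{\varphi_{r+\lambda}'(x)}{S'(x)}\theta_d'(x).
\]
Since $\varphi_{r+\lambda}<0$ in scale-derivative sense — more precisely $\varphi_{r+\lambda}$ is decreasing so $\varphi_{r+\lambda}'/S' < 0$ — and $\theta_d'$ changes sign exactly once from $+$ to $-$ at $x_d^*$ by Assumption \ref{Ass:MainAssumptions}(v), the derivative of $L_{\theta_d}^{r+\lambda}$ is negative on $(0,x_d^*)$ and positive on $(x_d^*,\infty)$; hence $L_{\theta_d}^{r+\lambda}$ is unimodal with a unique interior minimum at $x_d^*$. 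To pin down the sign and produce $\hat{x}$, evaluate the boundary behaviour: as $x\to\infty$, $\varphi_{r+\lambda}$ and its scale derivative decay, and the integral tail vanishes because $\theta_d\in\mathcal{L}_1^{r}$ (hence in $\mathcal{L}_1^{r+\lambda}$), so $L_{\theta_d}^{r+\lambda}(\infty^-)=0$; combined with the minimum at $x_d^*<\infty$ this forces $L_{\theta_d}^{r+\lambda}(x_d^*)<0$. As $x\to 0+$, using $\lim_{x\to 0+}\theta_d(x)>0$ and positivity of $\varphi_{r+\lambda}'/S'$-type terms, one checks $L_{\theta_d}^{r+\lambda}(0+)>0$. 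A function that starts positive, decreases to a negative minimum at $x_d^*$, then increases back up to $0$ has exactly one zero $\hat{x}\in(0,x_d^*)$, with $L_{\theta_d}^{r+\lambda}>0$ on $(0,\hat{x})$ and $<0$ on $(\hat{x},\infty)$.

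For the statement about $J$, write $J'(x)$ explicitly by the quotient rule. Using Lemma \ref{Lemma:ResolventRepresentationsLandK}, which expresses $L_{\theta_d}^{r+\lambda}$ in terms of $\varphi_{r+\lambda}''\cdot(R_{r+\lambda}\pi_{\gamma_d})' - \varphi_{r+\lambda}'\cdot(R_{r+\lambda}\pi_{\gamma_d})''$ (here one needs Lemma \ref{Lemma:LinksBetweenFunctions} to identify $\lambda(R_{r+\lambda}g_d) = (R_{r+\lambda}\pi_{\gamma_d}) - (R_r\pi)$ and note the $(R_r\pi)$ part is $(r+\lambda)$-harmonic after the $\mathcal{A}$-manipulation, so it may be absorbed), one finds that the numerator of $J'(x)$ is a positive multiple of $-L_{\theta_d}^{r+\lambda}(x)$, up to the factor $(\varphi_{r+\lambda}'(x))^2>0$ and $\sigma^2/(2\lambda S') > 0$ in the denominator structure. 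Therefore $J'(x)$ has the opposite sign to $L_{\theta_d}^{r+\lambda}(x)$, i.e. $J'<0$ on $(0,\hat{x})$, $J'(\hat{x})=0$, $J'>0$ on $(\hat{x},\infty)$, which is the claim.

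The $K_{\theta_u}^{r+\lambda}$ and $I$ statements follow the mirror-image computation: $\frac{d}{dx}K_{\theta_u}^{r+\lambda}(x) = -\frac{\psi_{r+\lambda}'(x)}{S'(x)}\theta_u'(x)$, and since $\psi_{r+\lambda}'/S'>0$ and $\theta_u'$ flips sign once at $x_u^*$ from $+$ to $-$, $K_{\theta_u}^{r+\lambda}$ is unimodal with a unique interior maximum at $x_u^*$; the boundary values are $K_{\theta_u}^{r+\lambda}(0+)<0$ (from $\lim_{x\to0+}\theta_u>0$ entering with the minus sign) and $K_{\theta_u}^{r+\lambda}(\infty^-) = -\infty$ or at least eventually negative (using $\theta_u\to-\infty$ and growth of $\psi_{r+\lambda}$), so there is a unique zero $\tilde{x}>x_u^*$ with $K_{\theta_u}^{r+\lambda}<0$ on $(0,\tilde{x})$ and $>0$ on $(\tilde{x},\infty)$, and the numerator of $I'(x)$ is a positive multiple of $K_{\theta_u}^{r+\lambda}(x)$ via Lemma \ref{Lemma:ResolventRepresentationsLandK}, giving the stated sign pattern for $I'$.

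The main obstacle I anticipate is the boundary analysis rather than the monotonicity: the derivative identities and the sign of the quotient numerators are essentially bookkeeping, but verifying $L_{\theta_d}^{r+\lambda}(0+)>0$ and $K_{\theta_u}^{r+\lambda}(\infty^-)<0$ requires careful control of the products $\varphi_{r+\lambda}'(x)/S'(x)\cdot\theta_d(x)$ near the natural boundary $0$ and $\psi_{r+\lambda}'(x)/S'(x)\cdot\theta_u(x)$ near $\infty$, against the competing integral terms — the natural-boundary hypothesis and the integrability Assumption \ref{Ass:MainAssumptions}(ii) are exactly what make these limits tractable, and this is where the cited arguments from \cite{Lempa2014} (Lemma 3) and \cite{AlvarezLempa2008} (Lemma 3.1) do the real work.
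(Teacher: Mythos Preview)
Your overall strategy---differentiate $L$ and $K$ using the identity $(f'/S')' = (\mathcal{A}f)m'$, read off monotonicity from the single sign change of $\theta_n'$, and then link $J'$ and $I'$ to $L$ and $K$ via Lemma~\ref{Lemma:ResolventRepresentationsLandK}---is exactly the route the paper intends (it simply defers to \cite{Lempa2014} Lemma~3 and \cite{AlvarezLempa2008} Lemma~3.1). The $L_{\theta_d}^{r+\lambda}$ half is essentially correct: $L' = (\varphi_{r+\lambda}'/S')\theta_d'$, minimum at $x_d^*$, and $J'(x) = -\dfrac{2S'(x)}{\sigma^2(x)(\varphi_{r+\lambda}'(x))^2}\,L_{\theta_d}^{r+\lambda}(x)$, so the sign equivalence is clean.

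There is, however, a genuine gap in your $K_{\theta_u}^{r+\lambda}$ analysis: the signs are inverted at every step and the stated shape cannot produce the conclusion you (correctly) write down. From $K'(x)=-\dfrac{\psi_{r+\lambda}'(x)}{S'(x)}\theta_u'(x)$ with $\psi_{r+\lambda}'/S'>0$ and $\theta_u'$ going $+\to -$ at $x_u^*$, one gets $K'<0$ on $(0,x_u^*)$ and $K'>0$ on $(x_u^*,\infty)$, i.e.\ a \emph{minimum} at $x_u^*$, not a maximum. For the boundaries, the natural-boundary behaviour at $0$ gives $\psi_{r+\lambda}'/S'\to 0$ and the integral over $(0,x)$ vanishes, so $K_{\theta_u}^{r+\lambda}(0+)=0$, not $<0$; and using the representation $K(x)=(r+\lambda)\int_0^x(\theta_u(y)-\theta_u(x))\psi_{r+\lambda}m'\,dy$ together with $\theta_u(x)\to-\infty$ shows $K(x)\to+\infty$, not $-\infty$. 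With your stated premises (maximum at $x_u^*$, negative at both ends) a unique zero $\tilde{x}>x_u^*$ with $K<0$ below and $K>0$ above is impossible---you would get either two zeros or none. The correct picture is: $K$ decreases from $0$ to $K(x_u^*)=(r+\lambda)\int_0^{x_u^*}(\theta_u(y)-\theta_u(x_u^*))\psi_{r+\lambda}m'\,dy<0$, then increases to $+\infty$, yielding exactly one zero $\tilde{x}>x_u^*$.

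A smaller point: your justification of $L_{\theta_d}^{r+\lambda}(0+)>0$ invokes ``positivity of $\varphi_{r+\lambda}'/S'$-type terms'', but you correctly noted earlier that $\varphi_{r+\lambda}'/S'<0$. Near $0$ the two summands in $L$ are of the form $+\infty-\infty$, and the clean way through is the rewriting $L(x)=(r+\lambda)\int_x^\infty(\theta_d(y)-\theta_d(x))\varphi_{r+\lambda}m'\,dy$; this is where the boundary hypotheses in Assumption~\ref{Ass:MainAssumptions}(vi) and the integrability in (ii) enter, as you suspected.
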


\begin{lemma} \label{Lemma:MonotonicityOfHandQ} The monotonicity of the functions
$$
Q(g_d, \varphi_r; x)=\frac{g_d'(x) L^{r+\lambda}_{\varphi_r}(x)-\varphi_r'(x)L^{r+\lambda}_{g_d}(x)}{\varphi'_{r+\lambda}(x)}
$$
$$
H(\varphi_r, g_u; x)=\frac{\varphi_r'(x)K^{r+\lambda}_{g_d}(x)-g_u'(x) K^{r+\lambda}_{\varphi_r}(x)}{\psi'_{r+\lambda}(x)}
$$
$$
Q(g_d, \psi_r; x)=\frac{g_d'(x)L^{r+\lambda}_{\psi_r}(x)-\psi_r'(x)L^{r+\lambda}_{g_d}(x)}{\varphi'_{r+\lambda}(x)}
$$
$$
H(\psi_r, g_u; x)=\frac{\psi_r'(x)K^{r+\lambda}_{g_u}(x)-g_u'(x)K^{r+\lambda}_{\psi_r}(x)}{\psi'_{r+\lambda}(x)}
$$
is determined by the signs of $L^{r+\lambda}_{\theta_d}$ and $K^{r+\lambda}_{\theta_u}$.
\end{lemma}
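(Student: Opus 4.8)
The plan is to compute the derivatives of the four functions $Q$ and $H$ and show that each derivative, after simplification, is a positive multiple of $L^{r+\lambda}_{\theta_d}$ (for the two $Q$'s) or of $K^{r+\lambda}_{\theta_u}$ (for the two $H$'s). The starting point is the alternative representation from Lemma~\ref{Lemma:ResolventRepresentationsLandK}: for any $f \in \mathcal{L}_1^r$ we have
\[
L^{r+\lambda}_f(x) = \frac{\sigma^2(x)}{2\lambda S'(x)}\bigl[\varphi_{r+\lambda}''(x)\,\lambda(R_{r+\lambda}f)'(x) - \varphi_{r+\lambda}'(x)\,\lambda(R_{r+\lambda}f)''(x)\bigr],
\]
and the analogous one for $K^{r+\lambda}_f$. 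The key structural observation is that $L^{r+\lambda}_f/\varphi_{r+\lambda}'$ and $K^{r+\lambda}_f/\psi_{r+\lambda}'$ can be read as Wronskian-type expressions: up to the factor $\tfrac{\sigma^2}{2\lambda S'}$, $L^{r+\lambda}_f(x)/\varphi_{r+\lambda}'(x)$ is essentially $-\varphi_{r+\lambda}'(x)^2$ times the derivative of $\lambda(R_{r+\lambda}f)'(x)/\varphi_{r+\lambda}'(x)$ (and symmetrically with $\psi_{r+\lambda}$ for $K$). This is exactly the identity underlying the $J'$ and $I'$ statements in Lemma~\ref{Lemma:LandKsigns}.

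Concretely, for $Q(g_d,\varphi_r;x)$ I would first note that, since $\varphi_r$ and $g_d$ are fixed functions in $\mathcal{L}_1^r$, the numerator $g_d'(x)L^{r+\lambda}_{\varphi_r}(x) - \varphi_r'(x)L^{r+\lambda}_{g_d}(x)$ can be rewritten — using the representation above and the bilinearity of the bracket in $f$ — as $\tfrac{\sigma^2(x)}{2\lambda S'(x)}$ times $\varphi_{r+\lambda}''(x)\lambda(R_{r+\lambda}h)'(x) - \varphi_{r+\lambda}'(x)\lambda(R_{r+\lambda}h)''(x)$ where $h := g_d'\varphi_r - \varphi_r' g_d$ read appropriately; more carefully, one groups the $(R_{r+\lambda}\varphi_r)$ and $(R_{r+\lambda}g_d)$ terms and uses the resolvent equation \eqref{Eq:ResolventEquation} to collapse $R_{r+\lambda}R_r$-type combinations, together with Lemma~\ref{Lemma:LinksBetweenFunctions} which ties $g_d$, $\pi_{\gamma_d}$, $\theta_d$ and the various resolvents together. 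After dividing by $\varphi_{r+\lambda}'(x)$ and differentiating, the cross terms cancel and what survives is a positive (state-dependent, but sign-definite by positivity of $\sigma^2$, $S'$, $\lambda$ and convexity-derived positivity of $\varphi_{r+\lambda}'$) multiple of $L^{r+\lambda}_{\theta_d}(x)$. The same scheme, with $\varphi_r$ replaced by $\psi_r$, handles $Q(g_d,\psi_r;x)$; and the two $H$ functions are treated identically with the roles of $\psi_{r+\lambda}$, $K$, $\theta_u$ in place of $\varphi_{r+\lambda}$, $L$, $\theta_d$.

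The main obstacle I anticipate is purely computational bookkeeping: correctly expanding the products $g_d' L^{r+\lambda}_{\varphi_r} - \varphi_r' L^{r+\lambda}_{g_d}$ into second-order differential expressions in the resolvents and verifying that after differentiation all terms not proportional to $L^{r+\lambda}_{\theta_d}$ (respectively $K^{r+\lambda}_{\theta_u}$) genuinely cancel. In particular one must keep careful track of which resolvent order ($r$ versus $r+\lambda$) each function carries, invoke the relations in Lemma~\ref{Lemma:LinksBetweenFunctions} to convert $\lambda(R_{r+\lambda}g_n)$ into $(R_{r+\lambda}\theta_n) + g_n$, and use that $\varphi_r$, $\psi_r$ solve $(\mathcal{A}-r)f=0$ while $\varphi_{r+\lambda}$, $\psi_{r+\lambda}$ solve $(\mathcal{A}-(r+\lambda))f=0$, so that the second-derivative terms can be eliminated in favour of lower-order ones. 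Once the algebra is organized, the sign conclusion is immediate from Assumption~\ref{Ass:MainAssumptions} (positivity of $\sigma$, natural boundaries giving positivity of $S'$ on the interior) and the established convexity of the fundamental solutions (Assumption~\ref{Ass:MainAssumptions}(ii),(iv)), which makes $\varphi_{r+\lambda}'$ and $\psi_{r+\lambda}'$ sign-definite. Hence the monotonicity of each of $Q$ and $H$ is governed exactly by the sign of $L^{r+\lambda}_{\theta_d}$, respectively $K^{r+\lambda}_{\theta_u}$, as claimed.
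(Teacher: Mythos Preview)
Your overall plan---differentiate $Q$ and $H$, simplify using Lemmas~\ref{Lemma:LinksBetweenFunctions} and~\ref{Lemma:ResolventRepresentationsLandK} together with the resolvent equation, and reduce each derivative to a sign-definite factor times $L^{r+\lambda}_{\theta_d}$ or $K^{r+\lambda}_{\theta_u}$---is exactly the paper's approach. Two points in your execution need correction, though.

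First, the ``bilinearity'' shortcut does not work: in $g_d'(x)L^{r+\lambda}_{\varphi_r}(x)-\varphi_r'(x)L^{r+\lambda}_{g_d}(x)$ the coefficients $g_d'(x)$ and $\varphi_r'(x)$ are $x$-dependent, so you cannot fold them into a single $L^{r+\lambda}_h$. The paper instead applies the quotient rule first, using the elementary identity $g_d'(L^{r+\lambda}_{\varphi_r})'-\varphi_r'(L^{r+\lambda}_{g_d})'=0$ (both equal $\tfrac{\varphi_{r+\lambda}'}{S'}g_d'\varphi_r'$), and obtains Wronskian-type combinations such as $\varphi_r'\varphi_{r+\lambda}''-\varphi_r''\varphi_{r+\lambda}'$ and $g_d'\varphi_{r+\lambda}''-g_d''\varphi_{r+\lambda}'$. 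These are then recognised as $L^{r+\lambda}$-functionals via Lemma~\ref{Lemma:ResolventRepresentationsLandK}, after writing $\varphi_r=\lambda R_{r+\lambda}\varphi_r$ (immediate from $(\mathcal{A}-r)\varphi_r=0$) and $g_d=\lambda R_{r+\lambda}(g_d-\lambda^{-1}\theta_d)$ (from Lemma~\ref{Lemma:LinksBetweenFunctions} and the resolvent equation). The outcome is
\[
\varphi_{r+\lambda}'(x)^2\,Q'(g_d,\varphi_r;x)=\tfrac12\sigma^2(x)S'(x)\,L^{r+\lambda}_{\varphi_r}(x)\,L^{r+\lambda}_{\theta_d}(x),
\]
and analogously for the other three with $L^{r+\lambda}_{\psi_r}$, $K^{r+\lambda}_{\varphi_r}$, $K^{r+\lambda}_{\psi_r}$ in place of $L^{r+\lambda}_{\varphi_r}$.

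Second, your claim that the prefactor is always \emph{positive} is wrong and would give the wrong monotonicity direction for two of the four functions. One has $L^{r+\lambda}_{\varphi_r}<0$, $L^{r+\lambda}_{\psi_r}>0$, $K^{r+\lambda}_{\varphi_r}>0$, $K^{r+\lambda}_{\psi_r}<0$, so the prefactors alternate in sign. The lemma as stated is still established (the monotonicity is \emph{determined} by the sign of $L^{r+\lambda}_{\theta_d}$ or $K^{r+\lambda}_{\theta_u}$), but the subsequent Remark~\ref{Remark:MonotonicityOfHandQ}, which records the actual direction of monotonicity on each side of $\hat{x}$ and $\tilde{x}$, relies on getting these signs right.
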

\begin{proof}
A straight differentiation and the usage of harmonicity properties of $(R_r\pi)$ and $\varphi_r$ give
$g_d'(x)L^{r+\lambda}_{\varphi}(x)'-\varphi_r'(x) L^{r+\lambda}_{g_d}(x)'=0$. Thus, 
\begin{align*}
& \varphi_{r+\lambda}'(x)^2 Q'(g_d, \varphi_r; x) \\
= & (g_d''(x)L^{r+\lambda}_{\varphi}(x)-\varphi_r''(x) L^{r+\lambda}_{g_d}(x))\varphi_{r+\lambda}'(x) \\
+ & (\varphi_r'(x) L^{r+\lambda}_{g_d}(x)-g_d'(x)L^{r+\lambda}_{\varphi}(x))\varphi_{r+\lambda}''(x) \\
= & L^{r+\lambda}_{g_d}(x)(\varphi_r'(x)\varphi_{r+\lambda}''(x)-\varphi_r''(x)\varphi_{r+\lambda}'(x)) \\
- & L^{r+\lambda}_{\varphi}(x)(g_d''(x)\varphi_{r+\lambda}'(x)-g_d'(x)\varphi_{r+\lambda}''(x)).
\end{align*}
Using Lemma \ref{Lemma:LinksBetweenFunctions} and resolvent equation (\ref{Eq:ResolventEquation}) we see that 
$$
g_d = -(R_r \theta_d)=-\lambda (R_{r+\lambda}(-g_d+\lambda^{-1}\theta_d)).
$$ 
Therefore, by the lemma \ref{Lemma:ResolventRepresentationsLandK}, we arrive at
$$
\frac{1}{2}\lambda \sigma^2(x)S'(x)L^{r+\lambda}_{-g_d+\lambda^{-1}\theta}(x)= g_d''(x)\varphi_{r+\lambda}'(x)-g_d'(x)\varphi_{r+\lambda}''(x)
$$
and also
$$
\frac{1}{2}\lambda \sigma^2(x)S'(x)L^{r+\lambda}_{\varphi} = \varphi_r'(x)\varphi_{r+\lambda}''(x)-\varphi_r''(x) \varphi_{r+\lambda}'(x).
$$
Hence, the derivative reads as
\begin{align*}
& \varphi_{r+\lambda}'(x)^2 Q'(g_d, \varphi_r; x) \\
& = \frac{1}{2}\lambda \sigma^2(x)S'(x)L^{r+\lambda}_{g_d}(x)L^{r+\lambda}_{\varphi}(x)+\frac{1}{2}\lambda \sigma^2(x)S'(x)L^{r+\lambda}_{-g_d+\lambda^{-1}\theta_d}(x)L^{r+\lambda}_{\varphi}(x) \\
& = \frac{1}{2}\lambda \sigma^2(x)S'(x)L^{r+\lambda}_{\varphi}(x)(L^{r+\lambda}_{g_d}(x)+L^{r+\lambda}_{-g_d+\lambda^{-1}\theta_d}(x)) \\
& = \frac{1}{2} \sigma^2(x)S'(x)L^{r+\lambda}_{\varphi}(x)L^{r+\lambda}_{\theta_d}(x).
\end{align*}
This implies that the monotonicity is determined only by the sign of $L^{r+\lambda}_{\theta_d}$ as the sign of all the other terms are known: all the other terms are positive, except $L^{r+\lambda}_{\varphi}$ is negative. Similarly, we can calculate that
\begin{align*}
& \psi_{r+\lambda}'(x)^2 H'(\varphi_r, g_u; x) = \frac{1}{2} \sigma^2(x) S'(x)K^{r+\lambda}_{\varphi}(x)K^{r+\lambda}_{\theta_u}(x), \\
& \varphi_{r+\lambda}'(x)^2 Q'(g_d, \psi_r; x) = \frac{1}{2} \sigma^2(x)S'(x)L^{r+\lambda}_{\psi}(x)L^{r+\lambda}_{\theta_d}(x), \\
& \psi_{r+\lambda}'(x)^2 H'(\psi_r, g_u; x) = \frac{1}{2} \sigma^2(x) S'(x)K^{r+\lambda}_{\psi}(x)K^{r+\lambda}_{\theta_u}(x), 
\end{align*}
where $L^{r+\lambda}_{\psi}>0$, $K^{r+\lambda}_{\varphi} > 0$ and $K^{r+\lambda}_{\psi}<0$. This proves the claim.
\end{proof}

\begin{remark} \label{Remark:MonotonicityOfHandQ} Lemma \ref{Lemma:LandKsigns} and \ref{Lemma:MonotonicityOfHandQ} together show that there exists two points $\hat{x} < x_d^*$ and $\tilde{x}>x_u^*$ such that 
\begin{enumerate}[(i)]
\item  $Q(  g_d, \psi_r; x)$ is increasing on $(0,\hat{x})$ and decreasing on $(\hat{x},\infty)$, 
\item  $H( \psi_r, g_u; x)$ is increasing on $(0,\tilde{x})$ and decreasing on $(\tilde{x},\infty)$, 
\item  $Q( g_d, \varphi_r; x)$ is decreasing on $(0,\hat{x})$ and increasing on $(\hat{x},\infty)$, 
\item  $H( \varphi_r, g_u; x)$ is decreasing on $(0,\tilde{x})$ and increasing on $(\tilde{x},\infty)$. 
\end{enumerate} 
\end{remark}

The following lemma about the minimal excessive functions $\psi_r$ and $\varphi_r$ is useful in further analysis of $Q$ and $H$.
\begin{lemma} \label{Lemma:PhiPsiOrdering} The minimal excessive functions $\psi_{r}(x)$ and $\varphi_{r}(x)$ satisfy the following inequalities for $z < x < y$
\begin{align*}
    \frac{\psi_{r+\lambda}(z)}{\psi_{r}(z)} \leq \frac{\psi_{r+\lambda}(x)}{\psi_{r}(x)} \leq  \frac{\psi_{r+\lambda}'(x)}{\psi_{r}'(x)}, \\
    \frac{\varphi_{r+\lambda}(z)}{\varphi_{r}(z)} \leq  \frac{\varphi_{r+\lambda}(x)}{\varphi_{r}(x)} \leq \frac{\varphi_{r+\lambda}'(x)}{\varphi_{r}'(x)} 
\end{align*}
\end{lemma}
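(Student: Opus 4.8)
The plan is to reduce both displayed chains to a single pair of monotonicity statements: that $x\mapsto\psi_{r+\lambda}(x)/\psi_r(x)$ is non-decreasing and that $x\mapsto\varphi_{r+\lambda}(x)/\varphi_r(x)$ is non-increasing. Granting these, the outer left-hand inequality in each chain is immediate, being the monotonicity of the relevant ratio read off at two comparable points. For the right-hand inequality, write $\psi_{r+\lambda}=h\,\psi_r$ with $h:=\psi_{r+\lambda}/\psi_r$; then $\psi_{r+\lambda}'=h'\psi_r+h\,\psi_r'$, and since $h'\ge0$, $\psi_r>0$, $\psi_r'>0$,
\[
\frac{\psi_{r+\lambda}'(x)}{\psi_r'(x)}=\frac{h'(x)\,\psi_r(x)}{\psi_r'(x)}+h(x)\ \ge\ h(x)=\frac{\psi_{r+\lambda}(x)}{\psi_r(x)}.
\]
The same computation with $h:=\varphi_{r+\lambda}/\varphi_r$ works verbatim: now $h'\le0$ but $\varphi_r'<0$, so $h'\varphi_r/\varphi_r'\ge0$ and again $\varphi_{r+\lambda}'/\varphi_r'\ge h=\varphi_{r+\lambda}/\varphi_r$.

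To obtain the two monotonicities I would invoke the hitting-time representation of the fundamental solutions (cf.\ \cite{BorodinSalminen}): with $\tau_b=\inf\{t\ge0:X_t=b\}$, one has $\psi_r(x)/\psi_r(b)=\mathbb{E}_x[e^{-r\tau_b}]$ for $x\le b$ and $\varphi_r(x)/\varphi_r(b)=\mathbb{E}_x[e^{-r\tau_b}]$ for $x\ge b$. Since $\lambda>0$ gives $e^{-(r+\lambda)t}\le e^{-rt}$, for $z<x$
\[
\frac{\psi_{r+\lambda}(z)}{\psi_{r+\lambda}(x)}=\mathbb{E}_z[e^{-(r+\lambda)\tau_x}]\le\mathbb{E}_z[e^{-r\tau_x}]=\frac{\psi_r(z)}{\psi_r(x)},
\]
which rearranges to $\psi_{r+\lambda}(z)/\psi_r(z)\le\psi_{r+\lambda}(x)/\psi_r(x)$, i.e.\ the $\psi$-ratio is non-decreasing; the analogous inequality for $\varphi$ (with the process started at the larger point and hitting the smaller one from above) shows the $\varphi$-ratio is non-increasing.

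A purely analytic alternative, staying inside the ODE toolkit used elsewhere in the paper, is to set $W:=\psi_{r+\lambda}'\psi_r-\psi_{r+\lambda}\psi_r'$ and use the identity $\tfrac{d}{dx}(g'/S')=r\,m'g$, valid for any solution of $(\mathcal{A}-r)g=0$, together with its counterpart at level $r+\lambda$, to compute
\[
\frac{d}{dx}\Big(\frac{W}{S'}\Big)=\lambda\,m'(x)\,\psi_{r+\lambda}(x)\psi_r(x)>0 .
\]
Hence $W/S'$ is strictly increasing; since both boundaries are natural one has $\psi_r'(0+)/S'(0+)=\psi_{r+\lambda}'(0+)/S'(0+)=0$, so $W(x)/S'(x)\to0$ as $x\downarrow0$, forcing $W>0$ on $\mathbb{R}_+$, and $(\psi_{r+\lambda}/\psi_r)'=W/\psi_r^2>0$ is the desired monotonicity. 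The $\varphi$-statement follows the same way with the boundary point $+\infty$, which pins the sign of $W_\varphi/S'$ in the opposite direction.

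The proof carries no heavy computation; the one point needing care is a clean derivation of the monotonicity of the ratios $\psi_{r+\lambda}/\psi_r$ and $\varphi_{r+\lambda}/\varphi_r$. The probabilistic argument makes this essentially a one-liner but imports the hitting-time representation of $\psi_r,\varphi_r$; the analytic argument is self-contained but needs the boundary evaluation, which is precisely where the assumption that $0$ and $\infty$ are natural enters. Either way, once the ratio is known to be monotone the remaining step is the elementary differentiation displayed in the first paragraph.
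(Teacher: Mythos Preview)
Your argument for the $\psi$-chain is correct and essentially the paper's: both use the hitting-time representation $\psi_s(z)/\psi_s(x)=\mathbb{E}_z[e^{-s\tau_x}]$ to obtain the ratio monotonicity. For the second inequality the paper invokes an integral identity from \cite{Alvarez2004} (Corollary~3.2), namely
\[
\psi_r'(x)\psi_{r+\lambda}(x)-\psi_{r+\lambda}'(x)\psi_r(x)=-\lambda S'(x)\int_0^x\psi_r\psi_{r+\lambda}\,m'\le 0,
\]
whereas you deduce it in one line from the monotonicity via $\psi_{r+\lambda}'/\psi_r'=h+h'\psi_r/\psi_r'\ge h$. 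Your ``analytic alternative'' is precisely the differential form of the cited identity (differentiate $W/S'$ and integrate from the natural left boundary), so the two routes are the same computation packaged differently; yours is a touch more economical in that the derivative inequality is seen to be a consequence of the ratio monotonicity rather than proved independently.

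One point to flag. You correctly establish that $\varphi_{r+\lambda}/\varphi_r$ is \emph{non-increasing}, and this indeed yields the right-hand $\varphi$-inequality via $h'\varphi_r/\varphi_r'\ge 0$. However, for $z<x$ a non-increasing ratio gives $\varphi_{r+\lambda}(z)/\varphi_r(z)\ge\varphi_{r+\lambda}(x)/\varphi_r(x)$, the \emph{reverse} of the printed left-hand $\varphi$-inequality. This is a typo in the lemma statement (the otherwise unused $y$ in the hypothesis $z<x<y$ suggests the $\varphi$-line was meant to compare at $y>x$, not $z<x$); the paper's ``similarly'' for $\varphi$ in fact produces the non-increasing direction you found. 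Your monotonicity is the correct one---just note explicitly that the first $\varphi$-inequality should be read with the comparison point above $x$.
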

\begin{proof}
We observe that for all $s > 0$ and $z \leq x$ we have (see \cite{BorodinSalminen} pp. 18)
\begin{equation*}
\mathbb{E}_z[e^{-s\tau_x} \mid \tau_x < \infty] = \dfrac{\psi_{s}(z)}{\psi_{s}(x)},
\end{equation*}
where  $\tau_x = \inf\{ t \geq 0 \mid X_t = x \}$. Thus, 
\begin{equation} \label{MEFL1}
   \dfrac{\psi_{r}(z)}{\psi_{r}(x)} \geq \dfrac{\psi_{r+\lambda}(z)}{\psi_{r+\lambda}(x)}.
\end{equation}
Moreover, utilizing that $(\mathcal{A}-r)\psi_{r+\lambda}= (\mathcal{A}-(r+\lambda))\psi_{r+\lambda} + \lambda\psi_{r+\lambda} = \lambda\psi_{r+\lambda}$ with the Corollary 3.2 of \cite{Alvarez2004}, we have
\begin{equation*}
    \psi_{r}'(x) \psi_{r+\lambda}(x)  - \psi_{r+\lambda}'(x) \psi_{r}(x) = - \lambda S'(x)  \int_{0}^{x} \psi_{r}(y)\psi_{r+\lambda}(y)m'(y) \leq 0.
\end{equation*}
Reorganizing the above we get
\begin{equation} \label{MEFL2}
    \frac{\psi_{r}'(x)}{\psi_{r}(x)} \leq \frac{\psi_{r+\lambda}'(x) }{\psi_{r+\lambda}(x)}. 
\end{equation}
Combining (\ref{MEFL1}) and (\ref{MEFL2}) yields inequalities for $\psi_r$ and the inequalities for $\varphi_r$ are proven similarly.
\end{proof}

\section{The solution}
\subsection{The associated free boundary problem}
Similar to \cite{DupuisWang2002, Lempa2014, LempaSaarinen2021}, we can use straightforward heuristic arguments to formulate a free boundary problem for the candidate value function of the control problem. Denoting the positive twice continuously differentiable candidate value function by $F$ and two constant boundaries by $a$ and $b$, where $a <b$, the heuristics give us the free boundary problem
\begin{align}
& (\mathcal{A}-r)F(x) = -\pi(x), &  a < x < b & \label{Bb:1}\\  
& (\mathcal{A}-(r+\lambda))F(x) = -\pi(x)-\lambda(\gamma_d(x-b)+F(b)), & x \geq b & \label{Bb:2}\\ 
& (\mathcal{A}-(r+\lambda))F(x) = -\pi(x)-\lambda(\gamma_u(x-a)+F(a)), & x \leq a & \label{Bb:3}\\  
& F'(b) = \gamma_d, &  & \label{Bb:4}\\
& F'(a) = \gamma_u. &  & \label{Bb:5}
\end{align}
We first consider the equation (\ref{Bb:1}). This equation gives
\begin{equation*}
F(x) = B_1 \varphi_r(x) + B_2 \psi_r(x) + (R_r \pi)(x), \quad a<x<b,
\end{equation*}
where $B_1$ and $B_2$ are constants. Using the first order conditions (\ref{Bb:4}) and (\ref{Bb:5}), we find after some algebraic manipulation that
\begin{equation*}
B_1 = \frac{\gamma_d \psi_r'(a)-\gamma_u \psi_r'(b)+\psi_r'(b)(R_r \pi)'(a)- \psi_r'(a)(R_r \pi)'(b)}{\varphi_r'(b)\psi_r'(a)-\varphi_r'(a)\psi_r'(b)}
\end{equation*}
\begin{equation*}
B_2 =  \frac{-\gamma_d \varphi_r'(a) + \gamma_u \varphi_r'(b) - \varphi_r'(b)(R_r \pi)'(a) + \varphi_r'(a)(R_r \pi)'(b)}{\varphi_r'(b)\psi_r'(a)-\varphi_r'(a)\psi_r'(b) }.
\end{equation*}
Because the boundaries of the state space are natural, we find from the differential equations (\ref{Bb:2}) and (\ref{Bb:3}) that particular solutions to these are
\begin{align*}
& (R_{r+\lambda} \pi_{\gamma_d})(x)+\frac{\lambda}{\lambda + r}(F(b)-\gamma_d b), \quad x>b \\
& (R_{r+\lambda} \pi_{\gamma_u})(x)+\frac{\lambda}{\lambda + r}(F(a)-\gamma_u a), \quad x<a.
\end{align*}
Thus, by growth conditions given by lemma \ref{Lemma:ValueFunctioBounds}, continuity over the boundaries and first order conditions (\ref{Bb:4}) and (\ref{Bb:5}) we see that the general solutions can be written as
\begin{align*}
F(x) & =  C \varphi_{r+\lambda}(x)+ (R_{r+\lambda} \pi_{\gamma_d})(x) \\ &+ \frac{\lambda}{r}\bigg[ C \varphi_{r+\lambda}(b)+ (R_{r+\lambda} \pi_{\gamma_d})(b)-\gamma_d b \bigg], \quad \, x>b, \nonumber \\
F(x) & = D \psi_{r+\lambda}(x)+ (R_{r+\lambda} \pi_{\gamma_u})(x) \\ &+ \frac{\lambda}{r}\bigg[D \psi_{r+\lambda}(a)+ (R_{r+\lambda} \pi_{\gamma_u})(a)-\gamma_u a \bigg], \quad \, x<a, \nonumber
\end{align*}
where the functions $\pi_{\gamma_d}$, $\pi_{\gamma_u}$ are defined in (\ref{Eq:PigammaDefinition}) and the constants $C$ and $D$ are
\begin{align*}
& C = \frac{\gamma_d - (R_{r+\lambda}\pi_{\gamma_d})'(b)}{\varphi_{r+\lambda}'(b)}, \\
& D = \frac{\gamma_u - (R_{r+\lambda}\pi_{\gamma_u})'(a)}{\psi_{r+\lambda}'(a)}.
\end{align*}
To solve the boundary points $a$ and $b$, we first use the $C^2$ condition at the upper boundary point $b$. This reads as
\begin{equation} \label{Eq:UpperboundaryC2Condition}
B_1 \varphi_r''(b) + B_2 \psi_r''(b) + (R_r \pi)''(b) = C \varphi_{r+\lambda}''(b) + (R_{r+ \lambda }\pi_{\gamma_d})''(b).
\end{equation}
To simplify this, we recall the definition of the functions $g_n$ (\ref{Eq:gnDefinition}) and lemma \ref{Lemma:LinksBetweenFunctions}.
These yield for the right-hand side of (\ref{Eq:UpperboundaryC2Condition})
\begin{align*}
& \frac{\gamma_d - (R_{r+\lambda}\pi_{\gamma_d})'(b)}{\varphi_{r+\lambda}'(b)} \varphi_{r+\lambda}''(b) + (R_{r+ \lambda }\pi_{\gamma_d})''(b) \\
= & \frac{g'_d(b) \varphi''_{r+\lambda}(b)-\lambda(R_{r+\lambda}g_d)'(b)\varphi''_{r+\lambda}(b)}{\varphi_{r+\lambda}'(b)} + \lambda(R_{r+\lambda}g_d)''(b) + (R_r \pi)''(b) \\
= & g'_d(b) \frac{ \varphi''_{r+\lambda}(b)}{\varphi_{r+\lambda}'(b)} +(R_r \pi)''(b) \\ & +  \frac{\lambda(R_{r+\lambda}g_d)''(b)\varphi'_{r+\lambda}(b)-\lambda(R_{r+\lambda}g_d)'(b)\varphi''_{r+\lambda}(b)}{\varphi_{r+\lambda}'(b)}.
\end{align*}
Here, by lemma \ref{Lemma:ResolventRepresentationsLandK}, the second term is an integral operator 
$$
\frac{-2 \lambda S'(b)}{\sigma^{2}(b)} L_{g_d}(b).
$$
To deal with the left-hand side of (\ref{Eq:UpperboundaryC2Condition}), a straight forward algebraic manipulation yields
\begin{align*}
& \frac{\gamma_d \psi_r'(a)-\gamma_u \psi_r'(b)+\psi_r'(b)(R_r \pi)'(a)- \psi_r'(a)(R_r \pi)'(b)}{\varphi_r'(b)\psi_r'(a)-\varphi_r'(a)\psi_r'(b)} \varphi_r''(b) \\
+ &\frac{-\gamma_d \varphi_r'(a) + \gamma_u \varphi_r'(b) - \varphi_r'(b)(R_r \pi)'(a)+\varphi_r'(a)(R_r \pi)'(b)}{\varphi_r'(b)\psi_r'(a)-\varphi_r'(a)\psi_r'(b) } \psi_r''(b) \\
= & \frac{g_d'(b)(\varphi''_r(b)\psi_r'(a)-\varphi_r'(a)\psi''_r(b))+g_u'(a)(\psi''_r(b)\varphi_r'(b)-\varphi_r''(b)\psi_r'(b))}{\varphi_r'(b)\psi_r'(a)-\varphi_r'(a)\psi_r'(b)}. 
\end{align*}
We note that $-2r S'(b)\sigma^{-2}(b) B_r=\psi''_r(b)\varphi_r'(b)-\varphi_r''(b)\psi_r'(b)$. Next we combine two terms from above, one from left side and one from right side of (\ref{Eq:UpperboundaryC2Condition}), yielding
\begin{align*}
& \frac{g_d'(b)(\varphi''_r(b)\psi_r'(a)-\varphi_r'(a)\psi''_r(b))}{\varphi_r'(b)\psi_r'(a)-\varphi_r'(a)\psi_r'(b)} - g'_d(b) \frac{ \varphi''_{r+\lambda}(b)}{\varphi_{r+\lambda}'(b)} \\
= & \frac{g_d'(b)(\varphi_{r+\lambda}'(b)(\varphi''_r(b)\psi_r'(a)-\varphi_r'(a)\psi''_r(b))}{\varphi_{r+\lambda}'(b)(\varphi_r'(b)\psi_r'(a)-\varphi_r'(a)\psi_r'(b))} \\ & - \frac{ \varphi''_{r+\lambda}(b)(\varphi_r'(b)\psi_r'(a)-\varphi_r'(a)\psi_r'(b)))}{\varphi_{r+\lambda}'(b)(\varphi_r'(b)\psi_r'(a)-\varphi_r'(a)\psi_r'(b))} \\
= & \frac{g_b'(b)\big(\psi_r'(a)(\varphi_{r+\lambda}'(b)\varphi''_r(b)-\varphi''_{r+\lambda}(b)\varphi_r'(b))}{\varphi_{r+\lambda}'(b)(\varphi_r'(b)\psi_r'(a)-\varphi_r'(a)\psi_r'(b))} \\ & + \frac{\varphi_r'(a)( \varphi''_{r+\lambda}(b)\psi_r'(b)-\varphi'_{r+\lambda}(b)\psi_r''(b)) \big)}{\varphi_{r+\lambda}'(b)(\varphi_r'(b)\psi_r'(a)-\varphi_r'(a)\psi_r'(b))}.
\end{align*}
Again by lemma \ref{Lemma:ResolventRepresentationsLandK}, we find that
$$
\varphi_{r+\lambda}'(b)\varphi''_r(b)-\varphi''_{r+\lambda}(b)\varphi_r'(b) = -2\lambda S'(b)\sigma^{-2}(b) L_{\varphi}(b)
$$
and 
$$
\varphi''_{r+\lambda}(b)\psi_r'(b)-\varphi'_{r+\lambda}(b)\psi_r''(b) =2 \lambda S'(b)\sigma^{-2}(b) L_{\psi}(b).
$$
Finally, using the above calculations the $C^2$ condition (\ref{Eq:UpperboundaryC2Condition}) reads as
\begin{equation*}
\frac{g_d'(b)\big(
\varphi_r'(a)L_{\psi}(b) -\psi_r'(a)L_{\varphi}(b) \big)-\frac{r}{\lambda}g_u'(a)\varphi_{r+\lambda}'(b)B_r}{\varphi_r'(b)\psi_r'(a)-\varphi_r'(a)\psi_r'(b)}  + L_{g_d}(b) = 0.
\end{equation*}
Reorganizing the above we have
\begin{equation*}
\scalebox{0.96}{$
\psi_r'(a) \bigg( \frac{\varphi_r'(b) L_{g_d}(b)-g_d'(b)L_{\varphi}(b)}{\varphi_{r+\lambda}'(b)}\bigg)+\varphi_r'(a) \bigg( \frac{g_d'(b)L_{\psi}(b)-\psi_r'(b)L_{g_d}(b)}{\varphi_{r+\lambda}'(b)}\bigg) = \frac{r}{\lambda}B_r g_u'(a)$}.
\end{equation*}
By entirely similar arguments we arrive at the lower boundary to the equation
\begin{equation*}
\scalebox{0.96}{$
\psi_r'(b) \bigg( \frac{g_u'(a)K_{\varphi}(a)-\varphi_r'(a) K_{g_u}(a)}{\psi_{r+\lambda}'(a)}\bigg)+\varphi_r'(b)\bigg( \frac{\psi_r'(a)K_{g_u}(a)-g_u'(a)K_{\psi}(a)}{\psi_{r+\lambda}'(a)}\bigg) = \frac{r}{\lambda}B_r g_d'(b)$}.
\end{equation*}
Since the function $F$ is $r$-harmonic in the interval $(a,b)$, we find by uniqueness, that the optimal thresholds should satisfy the pair of equations
\begin{align*}
& \frac{\psi_r'(a)K_{g_u}(a)-g_u'(a)K_{\psi}(a)}{\psi'_{r+\lambda}(a)} = \frac{g_d'(b)L_{\psi}(b)-\psi_r'(b)L_{g_d}(b)}{\varphi'_{r+\lambda}(b)}, \\
& \frac{\varphi_r'(a)K_{g_u}(a)-g_u'(a) K_{\varphi}(a)}{\psi'_{r+\lambda}(a)} = \frac{g_d'(b) L_{\varphi}(b)-\varphi_r'(b)L_{g_d}(b)}{\varphi'_{r+\lambda}(b)}.
\end{align*}
Using the notation defined in lemma \ref{Lemma:MonotonicityOfHandQ} the pair can be written as
\begin{equation} \label{Eq:TheOptimalPair}
\begin{cases} 
H(\psi_r, g_u; a) = Q( g_d, \psi_r; b),  \\
H(\varphi_r, g_u; a) = Q(g_d, \varphi_r; b).
\end{cases}
\end{equation}

\subsection{Uniqueness and existence}

In this section we prove that an unique solution $(a^*, b^*)$ to the pair of equations (\ref{Eq:TheOptimalPair}) exists. To this end, we use a slight generalisation of the method that was first used in \cite{Lempa2010} and \cite{AlvarezLempa2008} in a analogous setting and notice that in the solution $(a^*, b^*)$ the point $a^*$ must necessarily be a fixed point of the function
\begin{equation*} 
K(x) = H_{u, \varphi}^{-1}( Q_{d, \varphi}( Q_{d,\psi}^{-1}( H_{u, \psi}( x)))),
\end{equation*}
where slightly shorter notation 
\begin{align*}
& H_{u, \varphi}(x) = H(\varphi_r,g_u; x), \quad
H_{u, \psi}(x) = H(\psi_r, g_u; x), \\
& Q_{d,\psi}(x) = Q(g_d, \psi_r;x), \quad
Q_{d,\varphi}(x) = Q(g_d,\varphi_r;x),
\end{align*}
is introduced. Thus, in order to prove that the solution exists we must first ensure that $K$ is well-defined and then study its fixed points.

The following is the main result on the uniqueness of the solution.

\begin{proposition} \label{Prop:Uniqueness}
Let the assumptions 1 and 2 hold and assume that the pair of equations
\begin{equation*}
\begin{cases}
H( \psi_r, g_u; a) = Q(  g_d, \psi_r; b),  \\
H( \varphi_r, g_u; a) = Q( g_d, \varphi_r; b).
\end{cases}
\end{equation*}
has a solution $(a^*,b^*)$. Then the solution is unique. 
\end{proposition}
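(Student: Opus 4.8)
The plan is to reduce uniqueness of the pair $(a^*,b^*)$ to uniqueness of a fixed point of the map $K$ introduced above, and then to establish that fixed-point uniqueness by showing $0<K'<1$ at every fixed point. First I would make $K$ precise. By Remark~\ref{Remark:MonotonicityOfHandQ} (together with Lemma~\ref{Lemma:LandKsigns}), on the interval $(0,\tilde{x})$ the map $H_{u,\psi}$ is strictly increasing and $H_{u,\varphi}$ strictly decreasing, while on $(\hat{x},\infty)$ the map $Q_{d,\psi}$ is strictly decreasing and $Q_{d,\varphi}$ strictly increasing; moreover one checks that any solution of the pair necessarily satisfies $a^*\in(0,\tilde{x})$ and $b^*\in(\hat{x},\infty)$, so that the inverses $Q_{d,\psi}^{-1}$ and $H_{u,\varphi}^{-1}$ occurring in $K$ are well-defined on the relevant ranges. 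On this interval $K$ is then well-defined and $C^1$, it is the composition of the two increasing maps $H_{u,\psi}$, $Q_{d,\varphi}$ and the two decreasing maps $Q_{d,\psi}^{-1}$, $H_{u,\varphi}^{-1}$, hence strictly increasing, and $K(a^*)=a^*$ for every solution $(a^*,b^*)$, with $b^*=Q_{d,\psi}^{-1}(H_{u,\psi}(a^*))$ recovered uniquely from $a^*$; thus it suffices to prove $K$ has at most one fixed point.

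Next I would compute $K'$ at a fixed point. Writing $b^*=Q_{d,\psi}^{-1}(H_{u,\psi}(a^*))$ and using the chain rule together with the derivative identities established in the proof of Lemma~\ref{Lemma:MonotonicityOfHandQ}, one obtains
\[
K'(a^*)=\frac{H'(\psi_r,g_u;a^*)}{H'(\varphi_r,g_u;a^*)}\cdot\frac{Q'(g_d,\varphi_r;b^*)}{Q'(g_d,\psi_r;b^*)}=\frac{K^{r+\lambda}_{\psi_r}(a^*)}{K^{r+\lambda}_{\varphi_r}(a^*)}\cdot\frac{L^{r+\lambda}_{\varphi_r}(b^*)}{L^{r+\lambda}_{\psi_r}(b^*)},
\]
which is strictly positive since $K^{r+\lambda}_{\psi_r},L^{r+\lambda}_{\varphi_r}<0<K^{r+\lambda}_{\varphi_r},L^{r+\lambda}_{\psi_r}$. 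To bound the right-hand side by $1$ I would rewrite each of the four functionals through the Wronskian-type representations of Lemma~\ref{Lemma:ResolventRepresentationsLandK}, applied with $f=\psi_r$ and $f=\varphi_r$ (for which $\lambda R_{r+\lambda}\psi_r=\psi_r$ and $\lambda R_{r+\lambda}\varphi_r=\varphi_r$, because $\psi_r$ and $\varphi_r$ are $r$-harmonic), so that $K'(a^*)<1$ is turned into an inequality involving only $\psi_r,\psi_{r+\lambda},\varphi_r,\varphi_{r+\lambda}$ and their first and second derivatives, evaluated at $a^*$ and at $b^*$. This inequality is then closed by Lemma~\ref{Lemma:PhiPsiOrdering}: the ratio bounds $\psi_{r+\lambda}(z)/\psi_r(z)\le\psi_{r+\lambda}(x)/\psi_r(x)\le\psi_{r+\lambda}'(x)/\psi_r'(x)$ and their $\varphi$-counterparts, combined with $a^*<b^*$ to chain the estimates from the point $a^*$ to the point $b^*$, yield the bound — strictly, because for $\lambda>0$ the integrals appearing in the proof of Lemma~\ref{Lemma:PhiPsiOrdering} are strictly positive.

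Finally I would conclude by a standard argument. Since $K$ is continuous and differentiable with $0<K'(x)<1$ at every fixed point, the zero set of $x\mapsto K(x)-x$ is closed and discrete; if it contained two points, then taking two consecutive zeros $a_1<a_2$ we would get $K(x)<x$ on $(a_1,a_1+\varepsilon)$ (because $(K-\mathrm{id})'(a_1)<0$) and $K(x)>x$ on $(a_2-\varepsilon,a_2)$, so by the intermediate value theorem $K-\mathrm{id}$ would have a zero strictly between $a_1$ and $a_2$, contradicting that they are consecutive. Equivalently, the two curves $b=Q_{d,\psi}^{-1}(H_{u,\psi}(a))$ and $b=Q_{d,\varphi}^{-1}(H_{u,\varphi}(a))$ defined by the two equations of the pair are both strictly decreasing, and the estimate says that at any intersection the first is strictly flatter than the second, so they intersect at most once. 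As a solution $(a^*,b^*)$ is assumed to exist, $a^*$ is the unique fixed point of $K$ and $b^*$ is then determined by it, which proves the claim.

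I expect the estimate $K'<1$ of the second paragraph to be the main obstacle: one must unwind it all the way down to an explicit inequality among the minimal excessive mappings at the two distinct points $a^*<b^*$, keep track of every sign, and apply Lemma~\ref{Lemma:PhiPsiOrdering} in the correct direction, crucially exploiting $a^*<b^*$. A secondary, lighter point is the localization step verifying that any solution of the pair lies in $(0,\tilde{x})\times(\hat{x},\infty)$, so that the monotonicity from Remark~\ref{Remark:MonotonicityOfHandQ} applies and $K$ is well-defined there.
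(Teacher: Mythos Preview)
Your proposal is correct and follows essentially the same route as the paper: define the composite map $K$, observe that $a^*$ is a fixed point, use the chain rule together with the derivative identities of Lemma~\ref{Lemma:MonotonicityOfHandQ} to reduce $K'(a^*)$ to a ratio of the functionals $K^{r+\lambda}_{\psi_r},K^{r+\lambda}_{\varphi_r},L^{r+\lambda}_{\psi_r},L^{r+\lambda}_{\varphi_r}$, and then bound this by~$1$ via the ordering properties of the minimal excessive mappings. The paper is terser at the key step, recording the explicit intermediate estimate
\[
K'(a^*)\;<\;\frac{\varphi_r(b^*)}{\varphi_r(a^*)}\,\frac{\psi_r(a^*)}{\psi_r(b^*)}\;<\;1,
\]
whose second inequality is immediate from $a^*<b^*$ and the monotonicity of $\psi_r,\varphi_r$; your plan to reach this via Lemma~\ref{Lemma:ResolventRepresentationsLandK} and Lemma~\ref{Lemma:PhiPsiOrdering} is exactly how one justifies the first inequality.
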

\begin{proof}
Let $K:(0, \tilde{x}] \to (0, \tilde{x}]$ be defined as
\begin{equation} \label{Eq:FunctionKDefinition}
K(x) = \check{H}_{u, \varphi}^{-1}( \hat{Q}_{d, \varphi}( \hat{Q}_{d,\psi}^{-1}( \check{H}_{u, \psi}( x)))),
\end{equation}
where $\hat{\cdot}$ and $\check{\cdot}$ are restrictions to domains  $[\hat{x}, \infty)$ and $(0, \tilde{x}]$ respectively.  We notice that if a solution $(a^*,b^*)$ to the pair exists, then $a^*$ must be fixed point of $K$. Because the functions $H$ and $Q$ are monotonic in their domains we get
\begin{equation} \label{Eq:Kincreasing}
\begin{aligned} 
    K'(x)  = & \check{H}_{u, \varphi}^{-1'}(\hat{Q}_{d, \varphi}(\hat{Q}_{d, \psi}^{-1}(\check{H}_{u, \psi}(x)))) \cdot \hat{Q}_{d, \varphi}'(\hat{Q}_{d, \psi}^{-1}(\check{H}_{u, \psi}(x))) \\ 
    \cdot & \hat{Q}_{d, \psi}^{-1'}(\check{H}_{u, \psi}(x)) \cdot \check{H}_{u, \psi}'(x) > 0,
\end{aligned}
\end{equation}
and hence $K$ is increasing in its domain $(0,\tilde{x}_i]$. We observe using the fixed point property that
\begin{equation*}
    K'(a^*) = \frac{\hat{Q}_{d, \varphi}'(b^*)}{\hat{Q}_{d, \psi}'(b^*)}  \frac{\check{H}_{u, \psi}'(a^*)}{\check{H}_{u, \varphi}'(a^*)} =  \frac{L_{\varphi}^{r+\lambda}(b^*)}{L_{\psi}^{r+\lambda}(b^*)}  \frac{K_{\varphi}^{r+\lambda}(a^*)}{K_{\psi}^{r+\lambda}(a^*)} < \frac{\varphi_r(b^*)}{\varphi_r(a^*)} \frac{\psi_r(a^*)}{\psi_r(b^*)} < 1.
\end{equation*}
Consequently, whenever $K$ intersects the diagonal of $\mathbb{R}_+$, the intersection must be from above.
\end{proof}

To prove the existence of the solution we need to study the function $K$ in more detail. First, we analyze the limiting properties of the functions $Q$ and $H$. We notice that by lemma \ref{Lemma:LinksBetweenFunctions} and \ref{Lemma:ResolventRepresentationsLandK} we have
\begin{align*}
   & \varphi_{r+\lambda}'(x) Q( g_d, \varphi_r; x) \\
   & = g_d'(x)L_{\varphi}^{r+\lambda}(x)-\varphi_r'(x)L_{g_d}^{r+\lambda}(x) \\
   & = -\frac{\sigma(x)^2}{2 \lambda S'(x)} \big[ \varphi_r(x)(\varphi_{r+\lambda}''(x) (R_{r+\lambda}\theta_d)'(x) - \varphi_{r+\lambda}'(x) (R_{r+\lambda} \theta_d)''(x)) \\
   & + \varphi_{r+\lambda}'(x)(\varphi_r''(x)g_d'-\varphi_r'(x)g_d''(x)) \big] \\
   & = \frac{1}{\lambda}\big[\varphi_{r+\lambda}'(x)L_{\theta_d}^r(x)- \varphi_r'(x)L_{\theta_d}^{r+\lambda}(x)\big].
\end{align*}
Carrying out similar calculations for $H( \psi_r, g_u; a),$ $Q(  g_d, \psi_r; b)$ and $H( \varphi_r, g_u; a)$ the pair of equations reads as
\begin{equation} \label{Eq:PairRepresented}
\begin{aligned} 
    &  \frac{\psi_r'(a)}{\psi_{r+\lambda}'(a)} K_{\theta_u}^{r+\lambda}(a) - K_{\theta_u}^r(a) = -\frac{\psi_r'(b)}{\varphi_{r+\lambda}'(b)} L_{\theta_d}^{r+\lambda}(b)- K_{\theta_d}^r(b), \\
    & \frac{\varphi_r'(a)}{\psi_{r+\lambda}'(a)} K_{\theta_u}^{r+\lambda}(a) +  L_{\theta_u}^r(a) = -\frac{\varphi_r'(b)}{\varphi_{r+\lambda}'(b)}L_{\theta_d}^{r+\lambda}(b) + L_{\theta_d}^r(b).
\end{aligned}
\end{equation}
Assuming that $x > \max(\tilde{x},x_d^0)$ (here $x_d^0$ is the unique root of $\theta_d(x)$) we find using mean value theorem that 
\begin{align*}
& Q(g_d, \psi_r; x) = -\frac{\psi_r'(x)}{\varphi_{r+\lambda}'(x)} L_{\theta_d}^{r+\lambda}(x)- K_{\theta_d}^r(x)  \\
& = -(r+\lambda)\frac{\psi_r'(x)}{\varphi_{r+\lambda}'(x)} \int_x^{\infty} \theta_d(y) \varphi_{r+\lambda}(y)m'(y)dy - r\int_0^{x} \theta_d(y)\psi_r(y)m'(y)dy \\
& = \frac{\psi_r'(x)}{S'(x)} \theta_d(\xi_x) - r \int_{0}^x \theta_d(y)\psi_r(y)m'(y)dy \\
& < \frac{\psi_r'(x)}{S'(x)}(\theta_d(\xi_x)- \theta_d(x)) < 0,
\end{align*}
where $\xi_x \in (x, \infty)$. Hence, $\lim_{x \to \infty} Q(g_d, \psi_r; x) < 0$. Similarly, we find that $\lim_{x \to 0} H(\varphi_r, g_u; x) > 0$.

Moreover, if $x < \min(\hat{x}, x_u^0)$ (here $x_u^0$ is the unique root of $\theta_u(x)$) we get by mean value theorem and lemma \ref{Lemma:PhiPsiOrdering} that
\begin{align*}
    & H(\psi_r, g_u; x) \\
    & =  \frac{\psi_r'(x)}{\psi_{r+\lambda}'(x)} K_{\theta_u}^{r+\lambda}(x) - K_{\theta_u}^r(x) \\
    & = (r+\lambda)\frac{\psi_r'(x)}{\psi_{r+\lambda}'(x)}\int_0^{x} \theta_u(y)\frac{\psi_{r+\lambda}(y)}{\psi_r(y)}\psi_{r}(y)m'(y)dy - r\int_0^{x} \theta_u(y)\psi_r(y)m'(y)dy \\
    & = (r+\lambda)\underbrace{\frac{\frac{\psi_r'(x)}{\psi_{r+\lambda}'(x)}}{\frac{\psi_r(\xi_x)}{\psi_{r+\lambda}(\xi_x)}}}_{<1}\int_0^{x} \theta_u(y)\psi_{r}(y)m'(y)dy - r\int_0^{x} \theta_u(y)\psi_r(y)m'(y)dy,
\end{align*}
and thus by continuity $\lim_{x \to 0+}H(\psi_r, g_u;x) = 0$. Similarly, we find that $\lim_{x \to \infty} Q(g_d,\varphi_r;x)=0$. Also, because $\hat{x}$ is a root of $L_{\theta_d}^{r+\lambda}$ and $\hat{x} < x^*_d$ (consequently $\theta$ is increasing on $(0, \hat{x})$) we have that
\begin{align*}
    Q(g_d, \psi_r; \hat{x}) = -r\int_0^{\hat{x}} (\theta_d(y)-\theta_d(\hat{x}))\psi_r(y)m'(y)dy >0.
\end{align*}
And again, similarly $H(\varphi_r,g_u;\tilde{x})<0$. We summarize these findings:
\begin{equation} \label{Eq:LimitsOfHandQ}
\begin{cases}
     \,\, Q(g_d, \varphi_r; \infty) = 0, \\
     \,\, Q(g_d, \psi_r; \infty) < 0, \\
    \,\, Q(g_d, \psi_r; \hat{x}) > 0, 
\end{cases}
\begin{cases}
     \,\, H(\psi_r, g_u; 0+) = 0 \\
     \,\, H(\varphi_r, g_u; 0+) > 0, \\
     \,\, H(\varphi_r, g_u; \tilde{x}) < 0,
\end{cases}
\end{equation}

Unfortunately, the analysis so far is not enough for the existence of the solution. To guarantee that the fixed point of the function $K$ exists, the inequalities
\begin{align*}
    & H( \psi_r, g_u; \tilde{x}) \leq Q(  g_d, \psi_r; \hat{x}),  \\
    & H( \varphi_r, g_u; \tilde{x}) \leq Q( g_d, \varphi_r; \hat{x})
\end{align*}
have to hold. Our assumptions are not enough for these inequalities to hold in general, but the next lemma gives an easily verifiable sufficient condition.
\begin{lemma} \label{Lemma:HQorder}
Assume that $\tilde{x} < \hat{x}$. Then 
\begin{equation*}
\begin{aligned}
    & H( \psi_r, g_u; \tilde{x}) \leq Q(  g_d, \psi_r; \hat{x}),  \\
    & H( \varphi_r, g_u; \tilde{x}) \leq Q( g_d, \varphi_r; \hat{x})
\end{aligned}
\end{equation*}
\end{lemma}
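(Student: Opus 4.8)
The plan is to evaluate the four quantities in the claim at the two distinguished points $\hat x$ and $\tilde x$, where they collapse into single ``centered'' integrals against the speed measure, and then to chain a handful of elementary monotonicity estimates that exploit the hypothesis $\tilde x<\hat x$. First I would recall from Lemma \ref{Lemma:LandKsigns} that $\hat x$ is the unique zero of $L^{r+\lambda}_{\theta_d}$ and $\tilde x$ the unique zero of $K^{r+\lambda}_{\theta_u}$. Substituting $x=\hat x$ and $x=\tilde x$ into the representation \eqref{Eq:PairRepresented} annihilates the first term on each side, leaving
\[
\begin{aligned}
Q(g_d,\psi_r;\hat x)&=-K^r_{\theta_d}(\hat x), & Q(g_d,\varphi_r;\hat x)&=L^r_{\theta_d}(\hat x),\\
H(\psi_r,g_u;\tilde x)&=-K^r_{\theta_u}(\tilde x), & H(\varphi_r,g_u;\tilde x)&=L^r_{\theta_u}(\tilde x).
\end{aligned}
\]
Then, exactly as in the computation of $Q(g_d,\psi_r;\hat x)$ already carried out above --- which uses only $(\mathcal A-r)\psi_r=(\mathcal A-r)\varphi_r=0$ together with the naturality of the boundaries, in the form $\psi_r'(x)/S'(x)=r\int_0^x\psi_r(y)m'(y)\,dy$ and $-\varphi_r'(x)/S'(x)=r\int_x^\infty\varphi_r(y)m'(y)\,dy$ --- I would rewrite these as $-K^r_f(x)=r\int_0^x(f(x)-f(y))\psi_r(y)m'(y)\,dy$ and $L^r_f(x)=r\int_x^\infty(f(y)-f(x))\varphi_r(y)m'(y)\,dy$, valid for any $f\in\mathcal L_1^r$.

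The structural ingredient I would isolate next is the monotonicity of the difference $\delta:=\theta_d-\theta_u$. Since $\theta_n(x)=\pi(x)+\gamma_n(\mu(x)-rx)$, the payoff cancels and $\delta(x)=(\gamma_d-\gamma_u)(\mu(x)-rx)$, so $\delta'(x)=(\gamma_d-\gamma_u)(\mu'(x)-r)>0$ by Assumption \ref{Ass:MainAssumptions}(i) and (iv); hence $\delta$ is strictly increasing on $\mathbb R_+$. I would also record that $\hat x<x^*_d$, so $\theta_d$ is increasing on $(0,\hat x)$, and $\tilde x>x^*_u$, so $\theta_u$ is decreasing on $(\tilde x,\infty)$, both from Lemma \ref{Lemma:LandKsigns} and Assumption \ref{Ass:MainAssumptions}(v).

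With these in hand, the first inequality follows from the chain: starting from $-K^r_{\theta_d}(\hat x)=r\int_0^{\hat x}(\theta_d(\hat x)-\theta_d(y))\psi_r m'\,dy$, restrict the integral to $(0,\tilde x)$ (the discarded part is nonnegative since $\theta_d$ increases on $(0,\hat x)$), then lower $\theta_d(\hat x)$ to $\theta_d(\tilde x)$ (legitimate because $\tilde x<\hat x\le x^*_d$), and finally write $\theta_d=\theta_u+\delta$ and drop the nonnegative term $r\int_0^{\tilde x}(\delta(\tilde x)-\delta(y))\psi_r m'\,dy$; what is left is exactly $-K^r_{\theta_u}(\tilde x)=H(\psi_r,g_u;\tilde x)$. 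For the second inequality I would split $L^r_{\theta_u}(\tilde x)=r\int_{\tilde x}^{\hat x}(\theta_u(y)-\theta_u(\tilde x))\varphi_r m'\,dy+r\int_{\hat x}^\infty(\theta_u(y)-\theta_u(\tilde x))\varphi_r m'\,dy$, note the first piece is $\le0$ because $\theta_u$ decreases on $(\tilde x,\hat x)$, replace $\theta_u(\tilde x)$ by the smaller value $\theta_u(\hat x)$ in the second piece (which only increases it), and then write $\theta_u=\theta_d-\delta$ and drop the nonpositive term $-r\int_{\hat x}^\infty(\delta(y)-\delta(\hat x))\varphi_r m'\,dy$; what is left is $L^r_{\theta_d}(\hat x)=Q(g_d,\varphi_r;\hat x)$.

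The hard part is that none of the four quantities has a sign making the comparison immediate --- the two sides of each claimed inequality are, respectively, $\ge0$ and $\le0$ --- so one genuinely needs a bridge between integrals weighted by $\theta_u$ and those weighted by $\theta_d$; the crux is realizing that $\theta_d-\theta_u$, which a priori could oscillate, is in fact globally monotone precisely by Assumptions (i) and (iv), after which everything reduces to bookkeeping organized around $\tilde x<\hat x$ and the local monotonicity of $\theta_d$ below $\hat x$ and of $\theta_u$ above $\tilde x$. A minor point to check along the way is integrability of $\delta$ against $\psi_r m'$ near $0$ and against $\varphi_r m'$ near $\infty$, which is immediate from Assumption \ref{Ass:MainAssumptions}(ii) since $\mu(x)-rx$ is a linear combination of $\theta_n$ and $\pi$.
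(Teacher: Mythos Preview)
Your argument is correct and follows essentially the same route as the paper: reduce via the representation \eqref{Eq:PairRepresented} at the zeros $\hat x,\tilde x$ to comparing $K^r_{\theta_d}(\hat x)$ with $K^r_{\theta_u}(\tilde x)$ (resp.\ $L^r_{\theta_u}(\tilde x)$ with $L^r_{\theta_d}(\hat x)$), write these as centered integrals, and chain the monotonicity of $\theta_d$ on $(0,\hat x)$, of $\theta_u$ on $(\tilde x,\infty)$, and of $\delta=\theta_d-\theta_u$ globally. The only difference is cosmetic: you isolate the monotonicity of $\delta$ explicitly and also spell out the $\varphi_r$-case, whereas the paper handles the $\psi_r$-case and declares the other analogous.
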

\begin{proof}
We prove the first inequality as the second is shown similarly. Because $\tilde{x}$ and $\hat{x}$ are zeros of $K_{\theta_u}^{r+\lambda}$ and $L_{\theta_d}^{r+\lambda}$ respectively, we note by (\ref{Eq:PairRepresented}) that the first inequality is equivalent to
\begin{equation*}
    K_{\theta_d}^r(\hat{x})-K_{\theta_u}^r(\tilde{x}) \leq 0.
\end{equation*}
By lemma \ref{Lemma:LandKsigns} we have that $\hat{x} < x^*_d$ and $\tilde{x} > x_u^*$. Thus, by our assumption $\tilde{x} < \hat{x}$ we have that $x_u^* < \tilde{x} < \hat{x} < x_d^*$. Hence, we find that
\begin{align*}
    & K_{\theta_d}^r(\hat{x})-K_{\theta_u}^r(\tilde{x}) \\
    & = \int_0^{\hat{x}} \theta_d(z) \psi_r(z)m'(z)dz - \theta_d(\hat{x}) \int_0^{\hat{x}} \psi_r(z)m'(z)dz \\ &-\int_0^{\tilde{x}}\theta_u(z)\psi_r(z)m'(z)dz + \theta_u(\tilde{x})\int_0^{\tilde{x}} \psi_r(z)m'(z)dz \\
    & \leq \int_0^{\tilde{x}} (\theta_d(z)-\theta_d(\tilde{x})) \psi_r(z)m'(z)dz + \int_{\tilde{x}}^{\hat{x}} (\theta_d(z)- \theta_d(\hat{x})) \psi_r(z)m'(z)dz \\
    &-\int_0^{\tilde{x}}(\theta_u(z)-\theta_u(\tilde{x}))\psi_r(z)m'(z)dz, \\
    & \leq \int_0^{\tilde{x}} (\theta_d(z)-\theta_d(\tilde{x})-\theta_u(z)+\theta_u(\tilde{x})) \psi_r(z)m'(z)dz \leq 0,
\end{align*}
where the first two inequalities follow from part $(v)$ of assumptions \ref{Ass:MainAssumptions} and last from parts $(i)$ and $(iv)$ of assumptions \ref{Ass:MainAssumptions}. 
\end{proof}

The assumption $\tilde{x} < \hat{x}$ may seem restricting, but based on numerical calculations it cannot easily be relaxed, because there exists cases under the main assumptions \ref{Ass:MainAssumptions}, where $\tilde{x} > \hat{x}$ and the solution does not exist. Furthermore, because the points $\tilde{x}$ and $\hat{x}$ are known to be the unique roots of the functionals $K_{\theta_u}^{r+\lambda}$ and $L_{\theta_d}^{r+\lambda}$, it is straightforward to calculate them, at least numerically, and verify the assumption.

\begin{proposition} \label{Prop:Existence}
Let the assumptions 1 and 2 hold. Assume further that $\tilde{x} < \hat{x}$, then the pair of equations
\begin{equation*}
\begin{cases}
H( \psi_r, g_u; a) = Q(  g_d, \psi_r; b),  \\
H( \varphi_r, g_u; a) = Q( g_d, \varphi_r; b).
\end{cases}
\end{equation*}
has a unique solution $(a^*, b^*)$. 
\end{proposition}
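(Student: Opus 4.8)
The uniqueness claim is already established by Proposition \ref{Prop:Uniqueness} (conditionally on existence), so the whole content of the statement is the \emph{existence} of a fixed point of the map $K$ from \eqref{Eq:FunctionKDefinition}. The plan is to show that, under the extra hypothesis $\tilde x<\hat x$, $K$ is a well-defined, continuous, strictly increasing self-map of $(0,\tilde x]$ whose graph lies strictly above the diagonal near $0$ and weakly below it at $\tilde x$; an intermediate-value argument then yields $a^*$, and $b^*$ is reconstructed from it.

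First I would verify that $K$ is well-defined on $(0,\tilde x]$. By Remark \ref{Remark:MonotonicityOfHandQ} the four restricted functions are strictly monotone bijections onto intervals: $\check H_{u,\psi}$ is increasing on $(0,\tilde x]$, $\hat Q_{d,\psi}$ is decreasing on $[\hat x,\infty)$, $\hat Q_{d,\varphi}$ is increasing on $[\hat x,\infty)$, and $\check H_{u,\varphi}$ is decreasing on $(0,\tilde x]$. Combining this with the boundary values in \eqref{Eq:LimitsOfHandQ}, one gets that $\check H_{u,\psi}$ maps $(0,\tilde x]$ onto $(0,H(\psi_r,g_u;\tilde x)]$; since $Q(g_d,\psi_r;\infty)<0$ and $H(\psi_r,g_u;\tilde x)\le Q(g_d,\psi_r;\hat x)$ by the first inequality of Lemma \ref{Lemma:HQorder}, this interval is contained in the range $(Q(g_d,\psi_r;\infty),Q(g_d,\psi_r;\hat x)]$ of $\hat Q_{d,\psi}$, so $b:=\hat Q_{d,\psi}^{-1}(\check H_{u,\psi}(x))$ exists and lies in $[\hat x,\infty)$. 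Applying the increasing map $\hat Q_{d,\varphi}$ gives $\hat Q_{d,\varphi}(b)\in[Q(g_d,\varphi_r;\hat x),0)$, and by $H(\varphi_r,g_u;0+)>0$ together with the second inequality $H(\varphi_r,g_u;\tilde x)\le Q(g_d,\varphi_r;\hat x)$ of Lemma \ref{Lemma:HQorder}, this interval lies inside the range $[H(\varphi_r,g_u;\tilde x),H(\varphi_r,g_u;0+))$ of $\check H_{u,\varphi}$. Hence $K(x)=\check H_{u,\varphi}^{-1}(\hat Q_{d,\varphi}(b))\in(0,\tilde x]$. Continuity of $K$ is immediate as a composition of continuous monotone functions and their inverses, and \eqref{Eq:Kincreasing} already records that $K$ is strictly increasing.

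Next I would examine $K$ at the two endpoints of $(0,\tilde x]$. At $x=\tilde x$, $\check H_{u,\psi}(\tilde x)=H(\psi_r,g_u;\tilde x)\le Q(g_d,\psi_r;\hat x)$, so the decreasing map $\hat Q_{d,\psi}^{-1}$ sends it to a point $b\ge\hat x$; then $\hat Q_{d,\varphi}(b)\ge\hat Q_{d,\varphi}(\hat x)=Q(g_d,\varphi_r;\hat x)\ge H(\varphi_r,g_u;\tilde x)$, and applying the decreasing map $\check H_{u,\varphi}^{-1}$ yields $K(\tilde x)\le\tilde x$. As $x\to 0+$, $\check H_{u,\psi}(x)\to H(\psi_r,g_u;0+)=0$, so $b\to\bar b$, the unique (finite) root of $Q(g_d,\psi_r;\cdot)$ in $(\hat x,\infty)$; consequently $\hat Q_{d,\varphi}(b)\to Q(g_d,\varphi_r;\bar b)=:c$, with $Q(g_d,\varphi_r;\hat x)\le c<0$, so $K(x)\to\check H_{u,\varphi}^{-1}(c)>0$. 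Thus $x\mapsto K(x)-x$ is continuous on $(0,\tilde x]$, strictly positive near $0$ and $\le 0$ at $\tilde x$, and by the intermediate value theorem it vanishes at some $a^*\in(0,\tilde x]$.

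Finally, putting $b^*:=\hat Q_{d,\psi}^{-1}(\check H_{u,\psi}(a^*))\in[\hat x,\infty)$ gives $Q(g_d,\psi_r;b^*)=H(\psi_r,g_u;a^*)$, the first equation, while the fixed-point identity $\check H_{u,\varphi}^{-1}(\hat Q_{d,\varphi}(b^*))=a^*$ rearranges to $Q(g_d,\varphi_r;b^*)=H(\varphi_r,g_u;a^*)$, the second; moreover $a^*\le\tilde x<\hat x\le b^*$, so $(a^*,b^*)$ is an admissible pair with $a^*<b^*$. Uniqueness then follows from Proposition \ref{Prop:Uniqueness}. The main obstacle is the first step: ensuring that $K$ maps $(0,\tilde x]$ into itself hinges precisely on the range inclusions supplied by Lemma \ref{Lemma:HQorder}, and it is exactly here that the hypothesis $\tilde x<\hat x$ enters; the delicate part is the bookkeeping — tracking which branch (increasing or decreasing) each of the four restricted maps represents and matching ranges across the composition.
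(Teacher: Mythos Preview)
Your proposal is correct and follows essentially the same route as the paper's own proof: establish that $K$ is a well-defined, continuous, increasing self-map of $(0,\tilde x]$ using Lemma \ref{Lemma:HQorder}, Remark \ref{Remark:MonotonicityOfHandQ}, and the limits \eqref{Eq:LimitsOfHandQ}, then extract a fixed point and read off $b^*$. Your version is simply more explicit than the paper's: where the paper compresses the fixed-point step into the single phrase ``$K$ is a monotonic mapping from a set to its open subset, and hence it must have at least one fixed point,'' you carry out the endpoint analysis in full (showing $K(x)-x>0$ near $0$ via the finite root $\bar b$ of $Q(g_d,\psi_r;\cdot)$, and $K(\tilde x)-\tilde x\le 0$ via the range inclusions) and invoke the intermediate value theorem directly.
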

\begin{proof}
Define the function $K:(0, \tilde{x}] \to (0, \tilde{x}]$ as in (\ref{Eq:FunctionKDefinition}). Then by lemma \ref{Lemma:HQorder}, remark \ref{Remark:MonotonicityOfHandQ} and limiting properties (\ref{Eq:LimitsOfHandQ}) the function $K$ is well-defined. Further, by (\ref{Eq:Kincreasing}) $K$ is monotonic mapping. Thus, $K$ is a monotonic mapping from a set to its open subset, and hence it must have at least one fixed point, which we denote by $a^*$.  Then the pair $(a^*, b^*)$, where $b^* = Q_{d, \psi}^{-1}(H_{u,\psi}(a^*))$, is a solution to the equations (\ref{Eq:TheOptimalPair}). The uniqueness follow from proposition \ref{Prop:Uniqueness}. 
\end{proof}

\subsection{Verification}

We begin by stating the verification theorem.
\begin{proposition} \label{Theorem:Verification} Let assumptions \ref{Ass:MainAssumptions} hold and denote by $(a^*,b^*)$ the unique solution to the necessary conditions
\begin{equation*}
\begin{cases}
H( \psi_r, g_u; a) = Q(  g_d, \psi_r; b),  \\
H( \varphi_r, g_u; a) = Q( g_d, \varphi_r; b).
\end{cases}
\end{equation*}
Then the optimal policy is as follows. If the controlled process $X^{\zeta}$ is not inside the interval $(a^*, b^*)$ at a jump time $T_i$ of $N$, i.e. $X^{\zeta}_{T_{i-}} \not \in (a^*, b^*)$ for any $i$, the optimal policy is to take the controlled process $X^{\zeta}$ to the closest boundary of the interval $(a^*, b^*)$. Moreover, the optimal value function $V(x)$ reads as
\begin{equation} \label{Eq:CandidateValue}
V(x)=
    \begin{cases}
     \frac{\gamma_d - (R_{r+\lambda}\pi_{\gamma_d})'(b)}{\varphi_{r+\lambda}'(b)} \varphi_{r+\lambda}(x)+ (R_{r+\lambda} \pi_{\gamma_d})(x) + A_{d}(b), \\
    B_1 \varphi_r(x) + B_2 \psi_r(x) + (R_r \pi)(x), \\
    \frac{\gamma_u - (R_{r+\lambda}\pi_{\gamma_u})'(a)}{\psi_{r+\lambda}'(a)} \psi_{r+\lambda}(x)+ (R_{r+\lambda} \pi_{\gamma_u})(x)+ A_{u}(a),
    \end{cases}
\end{equation}
where
\begin{align*}
& B_1 = \frac{\gamma_d \psi_r'(a)-\gamma_u \psi_r'(b)+\psi_r'(b)(R_r \pi)'(a)- \psi_r'(a)(R_r \pi)'(b)}{\varphi_r'(b)\psi_r'(a)-\varphi_r'(a)\psi_r'(b)}, \\
& B_2 =  \frac{-\gamma_d \varphi_r'(a) + \gamma_u \varphi_r'(b) - \varphi_r'(b)(R_r \pi)'(a) + \varphi_r'(a)(R_r \pi)'(b)}{\varphi_r'(b)\psi_r'(a)-\varphi_r'(a)\psi_r'(b) }, \\
& A_{d}(b)=\frac{\lambda}{r}\bigg[ C \varphi_{r+\lambda}(b)+ (R_{r+\lambda} \pi_{\gamma_d})(b)-\gamma_d b \bigg], \\
& A_{u}(a)= \frac{\lambda}{r}\bigg[D \psi_{r+\lambda}(a)+ (R_{r+\lambda} \pi_{\gamma_u})(a)-\gamma_u a \bigg].
\end{align*}
\end{proposition}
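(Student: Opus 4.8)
The plan is to prove optimality by a standard verification argument: show that the candidate $V$ in \eqref{Eq:CandidateValue} dominates $J(x,\zeta)$ for every admissible control, and that the proposed policy attains it. First I would check that $V$ is well-defined and $C^1$ across $\mathbb{R}_+$ and $C^2$ except possibly at $a^*,b^*$: the pasting constants $B_1,B_2,C,D$ were chosen precisely to enforce the first-order conditions \eqref{Bb:4}--\eqref{Bb:5} and continuity, and the pair \eqref{Eq:TheOptimalPair} (equivalently the $C^2$-matching \eqref{Eq:UpperboundaryC2Condition} and its lower analogue) guarantees $V\in C^2$ at both boundary points as well. So in fact $V\in C^2(\mathbb{R}_+)$.

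Next I would establish the two key variational inequalities that the policy regions must satisfy:
\begin{align*}
& (\mathcal{A}-r)V(x)+\pi(x) \le 0, \qquad x\in(a^*,b^*), \\
& \sup_{\zeta\ge 0}\bigl\{V(x-\zeta)+\gamma_d\zeta\bigr\} \le V(x), \qquad V(x-b^*)+\gamma_d(x-b^*) = V(x)\ \text{for } x\ge b^*, \\
& \sup_{\zeta\ge 0}\bigl\{V(x+\zeta)-\gamma_u\zeta\bigr\} \le V(x), \qquad V(x+a^*)-\gamma_u(x-a^*) = V(x)\ \text{for } x\le a^*.
\end{align*}
On $(a^*,b^*)$, $V$ is by construction $r$-harmonic up to $\pi$, so the first line holds with equality there; for $x$ outside, I would use the ODEs \eqref{Bb:2}--\eqref{Bb:3} together with the sign information on the functionals $Q$ and $H$ from Lemma \ref{Lemma:LandKsigns}, Lemma \ref{Lemma:MonotonicityOfHandQ} and Remark \ref{Remark:MonotonicityOfHandQ}, and the ordering $a^*<b^*$ (traceable to $x_u^*<x_d^*$ of Remark \ref{Remark:xu xd Ordering}), to verify that moving to the nearest boundary is the optimal jump and that no jump is beneficial inside $(a^*,b^*)$. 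Concretely, the "no intervention is beneficial inside" part reduces to monotonicity of $x\mapsto V'(x)$ relative to $\gamma_d,\gamma_u$ on $[a^*,b^*]$, i.e. $\gamma_d\le V'(x)\le\gamma_u$ there, which follows from $V'(a^*)=\gamma_u$, $V'(b^*)=\gamma_d$, $\gamma_d<\gamma_u$ and the shape of $V$; the "jump to the boundary" part uses that for $x>b^*$ the map $\zeta\mapsto V(x-\zeta)+\gamma_d\zeta$ is maximised at $\zeta = x-b^*$ because $V'\le\gamma_d$ on $(b^*,\infty)$ and $V'\ge\gamma_d$ on $(a^*,b^*)$, with the symmetric statement at $a^*$.

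Then comes the probabilistic step. For an arbitrary admissible $(\zeta^d,\zeta^u)$ I would apply the Itô–Dynkin formula to $e^{-rt}V(X^\zeta_t)$, splitting the dynamics into the diffusive part on the intervals between signal arrivals and the jumps at the Poisson times $T_i$. Between jumps $V$ satisfies $(\mathcal{A}-r)V+\pi\le 0$ only on $(a^*,b^*)$; on the exterior regions $V$ solves $(\mathcal{A}-(r+\lambda))V = -\pi-\lambda(\text{affine})$, so the extra $-\lambda V$ term is exactly compensated by the expected contribution of the next Poisson jump taking the process to the boundary — this is the usual mechanism by which the $r+\lambda$-resolvent encodes "wait for the signal, then jump". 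Taking expectations, using that the controls are of the form $d\zeta = \eta\, dN$ so that control costs/gains accrue only at the $T_i$, and using the jump inequalities above, one gets
\[
V(x) \ge \mathbb{E}_x\Bigl[\int_0^{t} e^{-rs}\pi(X^\zeta_s)\,ds + \int_0^t e^{-rs}(\gamma_d\, d\zeta^d_s-\gamma_u\, d\zeta^u_s)\Bigr] + \mathbb{E}_x[e^{-rt}V(X^\zeta_t)].
\]
Letting $t\to\infty$, the last term vanishes by the growth bounds of Lemma \ref{Lemma:ValueFunctioBounds} together with Assumption \ref{Ass:MainAssumptions}(ii) (the $\mathcal{L}_1^r$ membership of id and $\theta_n$, hence of the resolvents appearing in $V$) and dominated convergence, yielding $V(x)\ge J(x,\zeta)$. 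For the reverse inequality I would run the same computation with the proposed policy, for which every inequality above is an equality — on $(a^*,b^*)$ because $V$ is $r$-harmonic up to $\pi$, and at the intervention times because the jump lands exactly on the nearest boundary — so $V(x)=J(x,\zeta^*)$, and combining the two gives $V(x)=\sup_\zeta J(x,\zeta)$.

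The main obstacle I expect is the transversality/limit step, i.e. controlling $\mathbb{E}_x[e^{-rt}V(X^\zeta_t)]\to 0$ uniformly over admissible controls near the natural boundaries $0$ and $\infty$: one has to combine the linear-growth control on $V$ coming from the resolvent representation (via $A_d,A_u$ and the $(R_{r+\lambda}\pi_{\gamma_n})$ terms) with the integrability Assumption \ref{Ass:MainAssumptions}(ii) and the fact that an admissible control cannot move the process faster than the signal allows, and to make sure the argument does not secretly assume the control has finite expected total variation. A secondary technical point is bookkeeping the Itô formula across the Poisson jumps cleanly — writing $X^\zeta$ as a diffusion plus a pure-jump part driven by $N$ and using the independence of $N$ and $W$ — but this is routine once the variational inequalities and the $(r+\lambda)$-resolvent identities of Lemma \ref{Lemma:LinksBetweenFunctions} are in place.
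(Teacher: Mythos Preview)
Your overall plan—It\^o formula, jump bounds, Poisson compensation, transversality—is the route the paper takes, and your instinct that the compensation between the $-\lambda V$ term and the Poisson jumps is the heart of the matter is correct. But the variational inequalities you display are the ones for an \emph{unconstrained} singular/impulse problem, and they are false here. For $x>b^*$ one has
\[
\sup_{\zeta\ge 0}\bigl\{V(x-\zeta)+\gamma_d\zeta\bigr\}\;=\;V(b^*)+\gamma_d(x-b^*)\;>\;V(x),
\]
strictly: the value of jumping to $b^*$ exceeds $V(x)$ precisely because under the Poisson constraint you cannot jump immediately and must wait for the signal. Equivalently, $x\mapsto V(x)-\gamma_d x$ has its unique global maximum at $b^*$ (this is what the paper proves in Lemma~\ref{Lemma:CandidateValueProperties}), so on $\{x>b^*\}$ the jump contribution $V(b^*)-V(x)+\gamma_d(x-b^*)$ is \emph{positive}, not $\le 0$. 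Your asserted equality $V(b^*)+\gamma_d(x-b^*)=V(x)$ on $\{x\ge b^*\}$ is therefore wrong, and you cannot bound the jump sum by zero in the It\^o decomposition.

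What actually makes the supermartingale work is the compensator trick that your prose hints at but does not implement. Set $\Phi_b(x)=\mathbbm{1}_{\{x>b^*\}}\bigl[(V(x)-\gamma_d x)-(V(b^*)-\gamma_d b^*)\bigr]\le 0$ and define $\Phi_a$ analogously. The free-boundary ODEs \eqref{Bb:1}--\eqref{Bb:3} give the global identity $(\mathcal{A}-r)V+\pi=\lambda(\Phi_a+\Phi_b)$ on all of $\mathbb{R}_+$ (so in particular $(\mathcal{A}-r)V+\pi\le 0$ holds everywhere, not ``only on $(a^*,b^*)$''), while the global-maximum property shows that every admissible jump at a Poisson time contributes at most $-(\Phi_a+\Phi_b)(X^\zeta_{s-})\ge 0$. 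Writing the jump sum as a $dN$-integral and subtracting the compensator $\lambda\,ds$, the drift term $\lambda(\Phi_a+\Phi_b)\,ds$ and the jump bound combine exactly into $\int(-\Phi_a-\Phi_b)\,d\tilde N_s$, a local martingale. The paper then notes this local martingale is uniformly bounded from below, hence a supermartingale, which disposes of the limit step without needing to control $\mathbb{E}_x[e^{-rt}V(X^\zeta_t)]$ directly for arbitrary admissible $\zeta$. For the attainability direction the paper handles the terminal term via the linear upper bound $V(y)\le V(b^*)+\gamma_d(y-b^*)$ together with $\mathrm{id}\in\mathcal{L}_1^r$, rather than through Lemma~\ref{Lemma:ValueFunctioBounds}. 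Once you replace your displayed inequalities by this compensated-Poisson bookkeeping, the rest of your outline matches the paper's proof.
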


Before proving the proposition, we show few properties of the candidate value function $F$.

\begin{lemma} \label{Lemma:CandidateValueProperties}
\hspace{1cm}
\begin{enumerate}[($i$)]
    \item $F''(x) \leq 0$ for all $x \in (a,b)$ 
    \item The function $x \mapsto F(x)-\gamma_d x$ has an unique global maximum at $b$. Similarly, the function $x \mapsto F(x)-\gamma_u x$ has an unique global maximum at $a$.
\end{enumerate}
\end{lemma}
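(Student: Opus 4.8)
\emph{Proof proposal.}
The plan is to establish (i) first and then read off (ii) from it together with the behaviour of $F$ outside $(a^*,b^*)$. Write $G_d(x)=F(x)-\gamma_d x$ and $G_u(x)=F(x)-\gamma_u x$; on $(a^*,b^*)$ these solve $(\mathcal A-r)G_n=-\theta_n$, so that $\tfrac12\sigma^2(x)F''(x)=rG_n(x)-\mu(x)\big(F'(x)-\gamma_n\big)-\theta_n(x)$ for $n\in\{d,u\}$. Before attacking the interior I would record two structural facts. First, $F''(a^*)\le0$ and $F''(b^*)\le0$: since $a^*\le\tilde x$ and $b^*\ge\hat x$ (forced by the construction of the map $K$ in Propositions \ref{Prop:Uniqueness}--\ref{Prop:Existence}), the identities $F'(x)-\gamma_d=\varphi_{r+\lambda}'(x)\big(J(x)-J(b^*)\big)$ for $x\ge b^*$ and $F'(x)-\gamma_u=\psi_{r+\lambda}'(x)\big(I(x)-I(a^*)\big)$ for $x\le a^*$, combined with Lemma \ref{Lemma:LandKsigns} and $\varphi_{r+\lambda}'<0<\psi_{r+\lambda}'$, give $F'<\gamma_d$ on $(b^*,\infty)$ and $F'>\gamma_u$ on $(0,a^*)$; hence $F'$ crosses each threshold decreasingly, i.e. $F''(a^*),F''(b^*)\le0$ (equivalently, via smooth fit, $rG_d(b^*)\le\theta_d(b^*)$ and $rG_u(a^*)\le\theta_u(a^*)$). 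Second, $\theta_d\ge0$ on $[a^*,b^*]$: by the optimality system $Q(g_d,\psi_r;b^*)=H(\psi_r,g_u;a^*)$, and the right-hand side is $>0$ because $H(\psi_r,g_u;0+)=0$ with $H(\psi_r,g_u;\cdot)$ increasing on $(0,\tilde x]\ni a^*$; since $Q(g_d,\psi_r;x)<0$ for $x>\max(\tilde x,x_d^0)=x_d^0$ (recall $\tilde x<\hat x<x_d^*<x_d^0$), we conclude $a^*<b^*\le x_d^0$, whence $\theta_d\ge0$ on $[a^*,b^*]$.

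For the core of (i) I would represent $G_d$ on $(a^*,b^*)$ through the fundamental solutions: $G_d=\varphi_rP_1+\psi_rP_2$ (the variation-of-parameters coefficients) with $P_1'=B_r^{-1}\psi_r\theta_d\,m'$, $P_2'=-B_r^{-1}\varphi_r\theta_d\,m'$; the cross terms cancel and a short computation gives $F''=G_d''=P_1\varphi_r''+P_2\psi_r''-2\theta_d/\sigma^2$. By Assumption \ref{Ass:MainAssumptions}(iv) and Corollary 1 of \cite{Alvarez2003}, $\varphi_r'',\psi_r''>0$; and by the second fact above $\theta_d\ge0$ on $[a^*,b^*]$, on which $P_1$ is non-decreasing and $P_2$ non-increasing. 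Consequently $F''\le0$ on $(a^*,b^*)$ would follow from $P_1(b^*)\le0$ and $P_2(a^*)\le0$, and (solving for $P_1,P_2$ from the pair $G_d,G_d'$) these endpoint signs are, up to positive factors, the signs of $G_d(b^*)=F(b^*)-\gamma_d b^*$ and of $(\gamma_u-\gamma_d)\varphi_r(a^*)-G_d(a^*)\varphi_r'(a^*)$. Equivalently, $\phi:=\tfrac12\sigma^2F''$ is continuous and $\le0$ at $a^*,b^*$, and one must rule out a positive bump in between: if $\phi>0$ on a maximal $(c,d)\subset(a^*,b^*)$ with $\phi(c)=\phi(d)=0$, then $F'-\gamma_d$ is strictly increasing on $(c,d)$ and, plugging $\phi'(c^{+})\ge0\ge\phi'(d^{-})$ into $\phi'+\tfrac{2\mu}{\sigma^2}\phi=(r-\mu')(F'-\gamma_d)-\theta_d'$, one gets $(r-\mu'(c))(F'(c)-\gamma_d)\ge\theta_d'(c)$ and $(r-\mu'(d))(F'(d)-\gamma_d)\le\theta_d'(d)$; these, together with $r-\mu'>0$, the monotonicity of $F'-\gamma_d$ on $(c,d)$, the relation $\theta_d'=\pi'-\gamma_d(r-\mu')$, and the single-hump profile of $\theta_d$ with $(c,d)\subset(0,x_d^0)$ and $\hat x<x_d^*$, should be pushed to a contradiction.

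Granting (i), part (ii) is short. On $(a^*,b^*)$, $F'$ is non-increasing with $F'(b^*)=\gamma_d<\gamma_u=F'(a^*)$, so $\gamma_d\le F'\le\gamma_u$; were $F'\equiv\gamma_d$ (resp. $\equiv\gamma_u$) on a nondegenerate subinterval, then $(\mathcal A-r)F=-\pi$ would make $\theta_d$ (resp. $\theta_u$) constant there, contradicting Assumption \ref{Ass:MainAssumptions}(v), so in fact $\gamma_d<F'<\gamma_u$ on $(a^*,b^*)$. Hence $x\mapsto F(x)-\gamma_d x$ is strictly increasing on $(a^*,b^*)$, strictly decreasing on $(b^*,\infty)$ (where $F'<\gamma_d$ because $b^*\ge\hat x$), and strictly increasing on $(0,a^*)$ (there $x\mapsto F(x)-\gamma_u x$ is strictly increasing because $a^*\le\tilde x$, and $x\mapsto(\gamma_u-\gamma_d)x$ is strictly increasing by Assumption \ref{Ass:MainAssumptions}(i)). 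By continuity of $F$ it therefore has a unique global maximum at $b^*$. The same argument gives that $x\mapsto F(x)-\gamma_u x$ is strictly increasing on $(0,a^*)$, strictly decreasing on $(a^*,b^*)$ (as $F'<\gamma_u$), and strictly decreasing on $(b^*,\infty)$ (a difference of a strictly decreasing and a strictly increasing function), so it has a unique global maximum at $a^*$.

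The step I expect to be hardest is the interior concavity in (i): excluding a positive interior bump of $\phi=\tfrac12\sigma^2F''$, i.e. pinning down the endpoint signs of $P_1,P_2$ (equivalently, closing the maximum-principle estimate). This is where one seems to need the full force of the shape hypothesis (v) on $\theta_n$ and the sharp ordering of the thresholds with respect to $x_d^*,x_d^0$ (and $x_u^*,x_u^0$), since $\theta_d''$ is not sign-controlled and the endpoint inequalities $F''(a^*),F''(b^*)\le0$ alone do not propagate into the interior.
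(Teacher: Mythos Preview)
Your treatment of (ii) is essentially the paper's: write $F'(x)-\gamma_d=\varphi_{r+\lambda}'(x)\big(J(x)-J(b^*)\big)$ for $x\ge b^*$ and the analogous identity below $a^*$, invoke Lemma~\ref{Lemma:LandKsigns} together with $b^*\ge\hat x$, $a^*\le\tilde x$, and then use (i) on $(a^*,b^*)$ to squeeze $\gamma_d\le F'\le\gamma_u$ there. The paper's argument for (ii) is the same up to phrasing (the printed line ``$\gamma_u\le F'(x)\le\gamma_d$'' in the paper is a typo, since $\gamma_d<\gamma_u$).

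For (i), however, the paper does not give an independent proof at all: it simply records that the statement is part (A) of Lemma~4.3 in \cite{Matomaki2012}. This matters, because your direct attack on (i) has a real gap. In the variation-of-parameters scheme you reduce concavity to $P_1(b^*)\le0$ and $P_2(a^*)\le0$, and then (solving for $P_1$ from $G_d,G_d'$ at $b^*$, where $G_d'(b^*)=0$) observe that $P_1(b^*)$ has the sign of $G_d(b^*)=F(b^*)-\gamma_d b^*$. But nothing in the construction forces $F(b^*)\le\gamma_d b^*$; on the contrary, once (ii) is established, $b^*$ is the \emph{global maximizer} of $x\mapsto F(x)-\gamma_d x$, and from the explicit form of $F$ (e.g.\ the GBM example in Section~6) this maximum value is typically strictly positive. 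Hence the endpoint sign $P_1(b^*)\le0$ generally fails, and the sufficient condition you isolate is not the right one. Your fallback maximum-principle argument on $\phi=\tfrac12\sigma^2F''$ is likewise left open: the one-sided inequalities you obtain at an interior zero of $\phi$ involve both $F'-\gamma_d$ and $\theta_d'$, and Assumption~\ref{Ass:MainAssumptions}(v) controls only the single-hump shape of $\theta_d$, not its convexity, so you do not have enough to force the contradiction you announce---as you yourself flag. In short, (ii) is fine and matches the paper; (i) needs either the cited argument from \cite{Matomaki2012} or a different idea, and neither of the two routes you sketch closes as stated.
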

\begin{proof}
The item $(i)$ is same as part (A) of lemma 4.3 in \cite{Matomaki2012}.

To prove the item $(ii)$, we find by straight differentiation in (\ref{Eq:CandidateValue}) that when $x > b$ 
\begin{equation*}
    F'(x) -\gamma_d= \varphi_{r+\lambda}'(x) \bigg[ \frac{(R_{r+\lambda}\pi_{\gamma_d})'(x)-\gamma_d}{\varphi_{r+\lambda}'(x)} - \frac{(R_{r+\lambda}\pi_{\gamma_d})'(b)-\gamma_d}{\varphi_{r+\lambda}'(b)}\bigg] < 0,
\end{equation*}
where we have used that $b > \hat{x} $ and lemma \ref{Lemma:LandKsigns}. Similarly, when $x < a$ we find that $F'(x) -\gamma_u > 0$. Furthermore, as $F'(a+) = \gamma_u > \gamma_d = F'(b-)$ we find by item $(i)$ that we must have $\gamma_u \leq F'(x) \leq \gamma_d$, when $x \in (a,b)$. Hence, the item $(i)$ follows by $F'(x) > \gamma_u >  \gamma_d$ when $x < a$ and $F'(x) < \gamma_d <  \gamma_d$ when $x > b$. 
\end{proof}

\begin{proof}[Proof of proposition \ref{Theorem:Verification}]
The proof is a slight modification of the proof of theorem 3.6 in \cite{Lempa2014}. Define the almost surely finite stopping times $\tau:= \rho \wedge \tau_{\rho},$ where $\tau_{\rho} = \inf \{t \geq 0: X_t^{\zeta} \geq \rho \}$ and let $x \in \mathbb{R}_+$. Applying generalised Ito's lemma to the stopped process $e^{-r(t \wedge \tau)}F(X_{t \wedge \tau}^{\zeta})$ we get
\begin{equation}  \label{Eq:ItosFormula}
\begin{aligned}
    e^{-r(t \wedge \tau)}F(X_{t \wedge \tau}^{\zeta}) & = F(x) + \int_0^{t \wedge \tau} e^{-rs} (\mathcal{A}-r)F(X_s^{\zeta})ds \\ & + \int_0^{t \wedge \tau} e^{-rs} \sigma(X^{\zeta}_s)F'(X_s^{\zeta})dW_s \\ &+ \sum_{s \leq t \wedge \tau} e^{-rs}[F(X^{\zeta}_s)-F(X^{\zeta}_{s-})].
\end{aligned}
\end{equation}
Define the functions
\begin{align*}
    & \Phi_b(x) = \mathbbm{1}_{\{x > b\}} ((F(x)-\gamma_d x)-(F(b)-\gamma_d b)), \\
    & \Phi_a(x) = \mathbbm{1}_{\{x < a\}} ((F(x)-\gamma_u x)-(F(a)-\gamma_u a)).
\end{align*}
Because the control can jump only if the Poisson process jumps we have by lemma \ref{Lemma:CandidateValueProperties} that
\begin{align*}
     F(X^{\zeta}_s)-F(X^{\zeta}_{s-}) + \gamma_d \Delta \zeta^d_s \leq \lambda\Phi_b(X^{\zeta}_{s-}).
\end{align*}
if the jump is down. And similarly if the jump is up we have that
\begin{align*}
     F(X^{\zeta}_s)-F(X^{\zeta}_{s-}) -\gamma_u \Delta \zeta^u_s  \leq \lambda\Phi_a(X^{\zeta}_{s-}).
\end{align*}
Combining this with (\ref{Eq:ItosFormula}) we find
\begin{equation}  \label{Eq:ItosBoundedBelowF}
\begin{aligned}
    & e^{-r(t \wedge \tau)}F(X_{t \wedge \tau}^{\zeta}) +  \int_0^{t \wedge \tau} e^{-rs} (\pi(X_s^{\zeta})ds + \gamma_d d\zeta^d_s - \gamma_u d \zeta^u_s)  \\ 
    & \leq F(x)  +  \int_0^{t \wedge \tau} e^{-rs} \sigma(X^{\zeta}_s)F'(X_s^{\zeta})dW_s \\
    & +  \lambda \int_0^{t \wedge \tau} e^{-rs} \Phi_b(X_s^{\zeta})ds + \lambda \int_0^{t \wedge \tau} e^{-rs} \Phi_a(X_s^{\zeta})ds \\ 
    & + \int_0^{t \wedge \tau} e^{-rs} \Phi_b(X_s^{\zeta})dN_s +  \int_0^{t \wedge \tau} e^{-rs} \Phi_a(X_s^{\zeta})dN_s \\
    & = F(x) + M_{t \wedge \tau} + Z_{t \wedge \tau},
\end{aligned}
\end{equation}
where 
\begin{align*}
   & M_t = \int_0^{t \wedge \tau} e^{-rs} \sigma(X^{\zeta}_s)F'(X_s^{\zeta})dW_s,  \\
   & Z_t = \int_0^{t \wedge \tau} e^{-rs} \Phi_b(X_s^{\zeta})d \tilde{N}_s +  \int_0^{t \wedge \tau} e^{-rs} \Phi_a(X_s^{\zeta})d\tilde{N}_s
\end{align*}
are local martingales and the process $\tilde{N}_t = N_t - \lambda t$ is a compensated Poisson process. We notice from (\ref{Eq:ItosBoundedBelowF}) that the local martingale part $M_{t \wedge \tau} + Z_{t \wedge \tau}$ is bounded uniformly from below by $-F(x)$ and is thus a supermartingale. Taking the expectation in (\ref{Eq:ItosBoundedBelowF}) and letting $t,\rho \to \infty$ we find
\begin{equation*} 
\begin{aligned} 
    & \lim_{t,\rho \to \infty} \mathbb{E}_x[e^{-r(t \wedge \tau)}F(X_{t \wedge \tau}^{\zeta})] +  \\ & \mathbb{E}_x\bigg[\int_0^{\infty} e^{-rs} (\pi(X_s^{\zeta})ds + \gamma_d d\zeta^d_s - \gamma_u d \zeta^u_s) ds \bigg] 
   \leq F(x)
\end{aligned}
\end{equation*}
Hence, we conclude that
\begin{align*}
    F(x) \geq J(x, \zeta).
\end{align*}

To prove that the candidate value is attainable with admissible policy we show that $F(x) \leq J(x, \zeta^*)$. We first note that as the integrand in $M_{t \wedge \tau}$ is continuous and the stopped process $X^{\zeta^*}_{t \wedge \tau}$ is bounded and thus $M_{t \wedge \tau}$ is a martingale. Furthermore, 
\begin{align} \label{Eq:Zpart1}
    \int_0^{t \wedge \tau} e^{-rs} \Phi_b(X^{\zeta^*}_{s-})d \tilde{N}_s \leq \int_0^{t \wedge \tau} e^{-rs} F(X^{\zeta^*}_{s-}) \mathbbm{1}_{[b^*, \infty)}ds,
\end{align}
because $\Phi_b(x) \geq -F(x)$. Since $\pi_{\gamma_d}(x)$ and $\varphi_{r+\lambda}(x)$ are in $\mathcal{L}_1^r$ we observe by resolvent equation (\ref{Eq:ResolventEquation}) that (\ref{Eq:Zpart1}) is bounded uniformly from above by integrable random variable and consequently is a submartingale. Treating the other integral term in $Z_{t\wedge \tau}$ similarly, we see that $Z_{t\wedge \tau}$ is a submartingale.
As the inequality (\ref{Eq:ItosBoundedBelowF}) is equality for the proposed optimal control, we get by taking expectations that 
\begin{align*}
    F(x) \leq \liminf_{\tau, \rho \to \infty} \mathbb{E}_x[e^{-r(t \wedge \tau)} F(X^{\zeta^*}_{t\wedge \tau})] + J(x, \zeta^*).
\end{align*}
By lemma \ref{Lemma:CandidateValueProperties} we find that
\begin{align*}
    \mathbb{E}_x [e^{-r(t \wedge \tau(\rho))} F(X^{\zeta^*}_{t \wedge \tau(\rho)})] \leq \mathbb{E}_x [e^{-r(t \wedge \tau(\rho))} (F(b^*) + \gamma(X^{\zeta^*}_{t \wedge \tau(\rho)}-b^*))].
\end{align*}
Hence, we must have $\liminf_{\tau, \rho \to \infty} \mathbb{E}_x[e^{-r(t \wedge \tau)} F(X^{\zeta^*}_{t\wedge \tau})] = 0$, as otherwise we would contradict the assumption that $\text{id} \in \mathcal{L}_1^r$. Consequently, $V(x) = J(x, \zeta^*)$.
\end{proof}

\begin{remark}
We note that the above verification theorem and uniqueness of the optimal pair in proposition \ref{Prop:Uniqueness}, do not require most of our assumptions, and these are only needed to prove the existence of the solution. Thus, other type of assumptions could be allowed as long as one can be sure that the solution exists. Unfortunately, it seems very hard to find general conditions that cover interesting cases. A hint to this direction is given in p. 252 of \cite{Matomaki2012}, where the shape of the functions $\theta_n$ are somewhat relaxed, but then more restrictive assumptions about the boundary behaviour of the diffusion are needed.
\end{remark}

\section{Note on Asymptotics}

A similar control problem, where controlling is not restricted by a signal process is studied in \cite{Matomaki2012}. In this singular control case, we know that under the assumptions \ref{Ass:MainAssumptions}, the optimal control thresholds $(a_s^*, b_s^*)$ are the unique solution to the pair of equations
\begin{equation} \label{Eq:OptimalityConditionMatomaki}
\begin{split}
    &  L^r_{\theta_u}(a^*_s)=   L^r_{\theta_d}(b^*_s),\\
    &  K^r_{\theta_u}(a^*_s) =   K^r_{\theta_d}(b^*_s).
\end{split}
\end{equation}
Intuitively, this solution should coincide with ours when the signal rate $\lambda$ tend to infinity, because then the controlling opportunities are more and more frequent. The next proposition verifies this intuition.

\begin{proposition}
Let $K_{\lambda}(x)$ be as in (\ref{Eq:FunctionKDefinition}) and define a function $k: (0, a^*_s] \to (0, a^*_s]$ as (see \cite{Matomaki2012} p. 248)
\begin{equation*}
    k(x) = \check{L}^{r \, \, -1}_{\theta_u}( \hat{L}^r_{\theta_d} ( \hat{K}^{r \, \, -1}_{\theta_d}( \hat{K}^r_{\theta_u}(x)))),
\end{equation*}
where $\hat{\cdot}$ and $\check{\cdot}$ are restrictions to domains $[x^*_d, \infty)$ and $(0, x^*_u]$. Then the unique fixed point $a^*$ of $K_{\lambda}$ converges to the unique fixed point $a^*_s$ of $k$ as $\lambda$ tends to infinity.
\end{proposition}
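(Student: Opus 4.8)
The plan is to reduce the statement to the observation that $K_\lambda$ and $k$ are built by composing, in exactly the same order, a quadruple of strictly monotone functions, and that these quadruples converge to one another as $\lambda\to\infty$. First note that rescaling each of the building blocks $Q(g_d,\varphi_r;\cdot)$, $Q(g_d,\psi_r;\cdot)$, $H(\psi_r,g_u;\cdot)$, $H(\varphi_r,g_u;\cdot)$ by the common factor $\lambda$ leaves the composition (\ref{Eq:FunctionKDefinition}) unchanged, since $Q^{-1}(z)=(\lambda Q)^{-1}(\lambda z)$ and likewise for $H$. Hence it suffices to establish that, locally uniformly on the relevant intervals, $\lambda\,Q(g_d,\varphi_r;\cdot)\to L^r_{\theta_d}$, $\lambda\,Q(g_d,\psi_r;\cdot)\to -K^r_{\theta_d}$, $\lambda\,H(\psi_r,g_u;\cdot)\to -K^r_{\theta_u}$ and $\lambda\,H(\varphi_r,g_u;\cdot)\to L^r_{\theta_u}$, together with the convergence of the endpoints of restriction $\hat x(\lambda)\to x^*_d$ and $\tilde x(\lambda)\to x^*_u$. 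The two sign changes cancel along the composition (the inner pair $\check H_{u,\psi}$, $\hat Q_{d,\psi}^{-1}$ produces $(-K^r_{\theta_d})^{-1}\circ(-K^r_{\theta_u})=(K^r_{\theta_d})^{-1}\circ K^r_{\theta_u}$), so these facts yield $K_\lambda\to k$ with $k$ the function of \cite{Matomaki2012}. Moreover, since $x^*_u<x^*_d$ by Remark \ref{Remark:xu xd Ordering}, the endpoint convergence gives $\tilde x(\lambda)<\hat x(\lambda)$ for all large $\lambda$, so Proposition \ref{Prop:Existence} applies and the unique fixed point $a^*=a^*(\lambda)$ of $K_\lambda$ is well defined for such $\lambda$.

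For the convergence of the building blocks I would start from the identities collected in (\ref{Eq:PairRepresented}) and derived just before it, which express each normalised building block as the corresponding $L^r$- or $K^r$-functional plus an error term of one of the forms $\frac{\varphi'_r(x)}{\varphi'_{r+\lambda}(x)}L^{r+\lambda}_{\theta_d}(x)$, $\frac{\psi'_r(x)}{\varphi'_{r+\lambda}(x)}L^{r+\lambda}_{\theta_d}(x)$, $\frac{\psi'_r(x)}{\psi'_{r+\lambda}(x)}K^{r+\lambda}_{\theta_u}(x)$ or $\frac{\varphi'_r(x)}{\psi'_{r+\lambda}(x)}K^{r+\lambda}_{\theta_u}(x)$. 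The analytic core is the limit, valid for continuous $f\in\mathcal{L}_1^r$ and locally uniformly in $x$, that $\frac{r+\lambda}{\varphi'_{r+\lambda}(x)}\int_x^{\infty}f(y)\varphi_{r+\lambda}(y)m'(y)\,dy\to -f(x)/S'(x)$ and $\frac{r+\lambda}{\psi'_{r+\lambda}(x)}\int_0^{x}f(y)\psi_{r+\lambda}(y)m'(y)\,dy\to f(x)/S'(x)$. For $f\equiv 1$ these are exact identities, because $\infty$ is natural so $\int_x^{\infty}(r+\lambda)\varphi_{r+\lambda}(y)m'(y)\,dy=-\varphi'_{r+\lambda}(x)/S'(x)$, and symmetrically at $0$; for general $f$ the claim follows once one knows that the probability measures obtained by normalising $(r+\lambda)\varphi_{r+\lambda}(y)m'(y)\,dy$ on $[x,\infty)$ concentrate at $x$ as $\lambda\to\infty$, which one reads off from the hitting-time representation $\varphi_{r+\lambda}(y)/\varphi_{r+\lambda}(x)=\mathbb{E}_y[e^{-(r+\lambda)\tau_x}\mathbbm{1}_{\{\tau_x<\infty\}}]$ (see \cite{BorodinSalminen}) and dominated convergence. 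Feeding this in, and using $\varphi'_r(x)/\varphi'_{r+\lambda}(x)\to 0$ and $\psi'_r(x)/\psi'_{r+\lambda}(x)\to 0$, shows each error term vanishes locally uniformly; since the functions are continuous and monotone with continuous limits, the pointwise convergence upgrades to local uniform convergence. For the endpoints, the same identity gives $K^{r+\lambda}_{\theta_u}(x)=(r+\lambda)\int_0^{x}(\theta_u(y)-\theta_u(x))\psi_{r+\lambda}(y)m'(y)\,dy$ and $L^{r+\lambda}_{\theta_d}(x)=(r+\lambda)\int_x^{\infty}(\theta_d(y)-\theta_d(x))\varphi_{r+\lambda}(y)m'(y)\,dy$; by concentration and the shape assumption \ref{Ass:MainAssumptions}(v), for fixed $x$ the sign of $K^{r+\lambda}_{\theta_u}(x)$ is eventually negative for $x<x^*_u$ and positive for $x>x^*_u$, so its unique zero $\tilde x(\lambda)$ tends to $x^*_u$, and symmetrically $\hat x(\lambda)\to x^*_d$.

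It remains to assemble these into convergence of the fixed points. Inverses of strictly monotone continuous functions converging locally uniformly converge locally uniformly on the interior of the limiting range, and compositions of locally uniformly convergent functions converge locally uniformly; together with the endpoint convergence (which makes the restricted domains $(0,\tilde x(\lambda)]$ and $[\hat x(\lambda),\infty)$ exhaust $(0,x^*_u)$ and $(x^*_d,\infty)$), this yields $K_\lambda\to k$ locally uniformly on the interior of $(0,x^*_u]$. Both $K_\lambda$ and $k$ are increasing (by (\ref{Eq:Kincreasing}) and its counterpart in \cite{Matomaki2012}) and, by the slope estimates $K_\lambda'(a^*(\lambda))<1$ from Proposition \ref{Prop:Uniqueness} and $k'(a^*_s)<1$, each crosses the diagonal from above at its unique fixed point; hence $k(x)>x$ on $(0,a^*_s)$ and $k(x)<x$ on $(a^*_s,x^*_u]$. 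Fix $\varepsilon>0$ small with $[a^*_s-\varepsilon,a^*_s+\varepsilon]\subset(0,x^*_u)$. For $\lambda$ large, $\tilde x(\lambda)>a^*_s+\varepsilon$ so $K_\lambda$ is defined there, and by uniform convergence $K_\lambda(a^*_s-\varepsilon)>a^*_s-\varepsilon$ while $K_\lambda(a^*_s+\varepsilon)<a^*_s+\varepsilon$; the intermediate value theorem places a fixed point of $K_\lambda$ in $(a^*_s-\varepsilon,a^*_s+\varepsilon)$, which by uniqueness is $a^*(\lambda)$. Letting $\varepsilon\downarrow0$ gives $a^*(\lambda)\to a^*_s$. (If the degenerate case $a^*_s=x^*_u$ occurs, one argues on one side only, using additionally $a^*(\lambda)\le\tilde x(\lambda)\to x^*_u$.)

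The main obstacle is the analytic heart of the second paragraph: making the resolvent-type limit and the associated measure concentration quantitative and uniform in $x$ on compacta, so that the errors genuinely survive being pushed through the inversions and compositions, and so that the shrinking/expanding domains near their endpoints $\hat x(\lambda)$, $\tilde x(\lambda)$ — where the functions flatten — cause no trouble. Everything past that (the monotonicity bookkeeping, the convergence of inverses, and the intermediate-value argument) is routine.
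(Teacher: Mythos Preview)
Your proposal is correct and follows the same core idea as the paper: use the representation \eqref{Eq:PairRepresented} together with the hitting-time identity $\psi_{r+\lambda}(z)/\psi_{r+\lambda}(x)=\mathbb{E}_z[e^{-(r+\lambda)\tau_x}]$ to show that the error terms $\frac{K^{r+\lambda}_{\theta_u}(x)}{\psi'_{r+\lambda}(x)}$ and $\frac{L^{r+\lambda}_{\theta_d}(x)}{\varphi'_{r+\lambda}(x)}$ vanish as $\lambda\to\infty$, whence $K_\lambda\to k$.

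The difference is one of completeness rather than strategy. The paper's argument is terse: it establishes the pointwise limit $K_\lambda(x)\to k(x)$ via monotone convergence and then simply stops, leaving the passage from convergence of the maps to convergence of their fixed points implicit. You supply precisely what the paper omits: the endpoint convergence $\hat x(\lambda)\to x^*_d$, $\tilde x(\lambda)\to x^*_u$ (needed to reconcile the shrinking domains of restriction), the upgrade to locally uniform convergence so that inverses and compositions behave well, and a clean intermediate-value argument exploiting that both $K_\lambda$ and $k$ cross the diagonal from above at their unique fixed points. Your identification of the ``analytic heart'' --- the measure-concentration step --- is exactly right, and the paper handles it more economically than you by invoking Lemma~\ref{Lemma:PhiPsiOrdering} (monotonicity of $\lambda\mapsto\psi_{r+\lambda}(x)/\psi'_{r+\lambda}(x)$) together with monotone convergence, rather than an explicit concentration argument; you might find that shortcut useful.
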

\begin{proof}

For all $s > 0$, we have 
\begin{equation} 
\mathbb{E}_x[e^{-s\tau_z} \mid \tau_z < \infty] = \dfrac{\psi_{s}(z)}{\psi_{s}(x)},  \quad \,  z \leq x 
\end{equation}
where $\tau_z = \inf\{ t \geq 0 \mid X_t = z \}$. Further, by lemma \ref{Lemma:PhiPsiOrdering} the function $\lambda \mapsto \frac{\psi_{r+\lambda}(x)}{\psi_{r+\lambda}'(x)}$ is decreasing. Thus,
\begin{equation*}
    \frac{\psi_{r+\lambda}(z)}{\psi_{r+\lambda}'(x)} = \frac{\psi_{r+\lambda}(x)}{\psi_{r+\lambda}'(x)} \mathbb{E}_x[e^{-(r+\lambda)\tau_z} \mid \tau_z < \infty] \to 0, \quad  \text{when } \lambda \to \infty.
\end{equation*}
Consequently, by monotone convergence we find that 
\begin{equation}
    \frac{K_{\theta_u}^{r+\lambda}(x)}{\psi_{r+\lambda}'(x)} \to 0, \quad  \text{when } \lambda \to \infty.
\end{equation}
Similarly we can show that 
\begin{equation}
    \frac{L_{\theta_d}^{r+\lambda}(x)}{\varphi_{r+\lambda}'(x)} \to 0, \quad  \text{when } \lambda \to \infty.
\end{equation}
Hence, we observe from the representation of the pair of equations (\ref{Eq:PairRepresented}) and monotonicity that 
\begin{equation*}
\lim_{\lambda \to \infty} K_{\lambda}(x) \xrightarrow{\lambda \to \infty} k(x).
\end{equation*}
\end{proof}
Also, a rather straightforward calculation of the limit $\lambda \to 0$ in $(\ref{Eq:CandidateValue})$ yields $V(x) = (R_r \pi)(x)$ for all $x \in \mathbb{R}_+$. This result corresponds to the case where the signal process does not jump at all and thus there is no opportunities to control the underlying process. Hence, the reward that the controller gets is the resolvent $(R_r \pi)(x)$, i.e. the expected cumulative present value of the instantaneous payoff $\pi$.

\section{Illustration: geometric Brownian motion}

We assume that the underlying diffusion is a standard geometric Brownian motion and thus the infinitesimal generator reads as
\begin{align*}
    \mathcal{A} = \frac{1}{2}\sigma^2 x^2\frac{d^2}{dx^2} + \mu x \frac{d}{dx},
\end{align*}
where $\mu \in \mathbb{R}_+$ and $\sigma \in \mathbb{R}_+$ are given constants. Furthermore, the scale density and the density of the speed measure read as
$$
S'(x) = x^{-\frac{2 \mu}{\sigma^2}}, \quad 
m'(x) = \frac{2}{\sigma^2} x^{\frac{2 \mu}{\sigma^2}-2}.
$$
Assume that $\mu < r$ and denote
\begin{align*}
& \beta_{\lambda} = \frac{1}{2}-\frac{\mu}{\sigma^2}+\sqrt{\bigg(\frac{1}{2}-\frac{\mu}{\sigma^2} \bigg)^2+\frac{2(r+\lambda)}{\sigma^2}} > 1, \\
& \alpha_{\lambda} = \frac{1}{2}-\frac{\mu}{\sigma^2} - \sqrt{\bigg(\frac{1}{2}-\frac{\mu}{\sigma^2} \bigg)^2+\frac{2(r+\lambda)}{\sigma^2}} < 0.
\end{align*}
Then the minimal $r$-excessive functions for $X$ read as $$\psi_{r}(x) = x^{\beta_0}, \varphi_{r}(x) = x^{\alpha_0}, \psi_{r+\lambda}(x) = x^{\beta_{\lambda}}, \varphi_{r+\lambda}(x) = x^{\alpha_{\lambda}}.$$

Define the instantaneous payoff by $\pi(x) = x^{\delta}, \, \, 0 < \delta < 1$. Then the net convenience yield is given by $\theta_n(x) = x^\delta -\gamma_n (r-\mu)x$, where $n \in \{u,d\}$ and $\gamma_d < \gamma_u$. We readily verify that our assumptions \ref{Ass:MainAssumptions} hold in this case. The resolvent reads as
$$
(R_r \pi)(x) = \frac{x^{\delta}}{r-\delta\mu- \frac{1}{2}\sigma^2\delta(\delta-1)}.
$$
Noting that $(\beta_0-\delta)(\delta-\alpha_0) = 2(r-\delta \mu- \frac{1}{2}\sigma^2 \delta(\delta-1))/\sigma^2$, we find the alternative representation
$$
(R_r \pi)(x) = \frac{2}{\sigma^2}\frac{x^{\delta}}{(\beta_0-\delta)(\delta-\alpha_0)}.
$$

To write down the pair of equations we first calculate the auxiliary functionals 
\begin{align*}
    K_{\theta_u}^{r+\lambda}(x) & = \frac{2(r+\lambda)}{\sigma^2} \bigg( \bigg[ \frac{1}{\delta-\alpha_{\lambda}}+\frac{1}{\alpha_{\lambda}}\bigg] x^{\delta-\alpha_{\lambda}}+\bigg[\frac{\gamma_u(\mu-r)}{1-\alpha_{\lambda}}+\frac{\gamma_u(\mu-r)}{\alpha_{\lambda}} \bigg] x^{1-\alpha_{\lambda}} \bigg), \\
    L_{\theta_d}^{r+\lambda}(x) & = -\frac{2(r+\lambda)}{\sigma^2} \bigg( \bigg[ \frac{1}{\delta-\beta_{\lambda}}+\frac{1}{\beta_{\lambda}}\bigg] x^{\delta-\beta_{\lambda}}-\bigg[\frac{\gamma_d(\mu-r)}{1-\beta_{\lambda}}+\frac{\gamma_d(\mu-r)}{\beta_{\lambda}} \bigg] x^{1-\beta_{\lambda}} \bigg).
\end{align*}
Then noting that $\beta_0-\beta_{\lambda}=\alpha_{\lambda}-\alpha_0$ and that $(r+\lambda)\beta_0 \alpha_0 = r \beta_{\lambda} \alpha_{\lambda}$, we find that the pair of equations read as 
\begin{equation}\label{Eq:GBMpair}
    \begin{aligned} 
      & \bigg[ \frac{\alpha_{\lambda}}{\alpha_0(\delta-\alpha_{\lambda})}- \frac{1}{\delta-\alpha_{0}} \bigg] a^{\delta-\alpha_{0}}+\bigg[\frac{\gamma_u(\mu-r)\alpha_{\lambda}}{\alpha_0(1-\alpha_{\lambda})}-\frac{\gamma_u(\mu-r)}{1-\alpha_{0}} \bigg] a^{1-\alpha_{0}} \\ 
      = & \bigg[ \frac{\beta_{\lambda}}{\alpha_0(\delta-\beta_{\lambda})}- \frac{1}{\delta-\alpha_{0}} \bigg] b^{\delta-\alpha_{0}}+\bigg[\frac{\gamma_d(\mu-r)\beta_{\lambda}}{\alpha_0(1-\beta_{\lambda})}-\frac{\gamma_u(\mu-r)}{1-\alpha_{0}} \bigg] b^{1-\alpha_{0}} , \\
      & \bigg[ \frac{\alpha_{\lambda}}{\beta_0(\delta-\alpha_{\lambda})}- \frac{1}{\delta-\beta_{0}} \bigg] a^{\delta-\beta_{0}}+\bigg[\frac{\gamma_u(\mu-r)\alpha_{\lambda}}{\beta_0(1-\alpha_{\lambda})}-\frac{\gamma_u(\mu-r)}{1-\beta_{0}} \bigg] a^{1-\beta_{0}} \\ 
      = & \bigg[ \frac{\beta_{\lambda}}{\beta_0(\delta-\beta_{\lambda})}- \frac{1}{\delta-\beta_{0}} \bigg] b^{\delta-\beta_{0}}+\bigg[\frac{\gamma_d(\mu-r)\beta_{\lambda}}{\beta_0(1-\beta_{\lambda})}-\frac{\gamma_u(\mu-r)}{1-\beta_{0}} \bigg] b^{1-\beta_{0}}.
\end{aligned}
\end{equation}
However, the assumption $\tilde{x}<\hat{x}$ in proposition \ref{Prop:Existence} has to be analyzed separately. We find that $\tilde{x}$ and $\hat{x}$ read as
\begin{equation*}
    \tilde{x} = \gamma_u(r-\mu)\frac{\delta-\alpha_0}{\delta(1-\alpha_0)}, \quad \, \, \hat{x} = \gamma_d(r-\mu)\frac{\delta-\beta_0}{\delta(1-\beta_0)}.
\end{equation*}
Hence, the condition $\tilde{x}<\hat{x}$ is equivalent to
\begin{equation*}
   \gamma_u \frac{\delta-\alpha_0}{1-\alpha_0} < \gamma_d \frac{\delta-\beta_0}{1-\beta_0}.
\end{equation*}
Unfortunately, it seems impossible to solve the pair (\ref{Eq:GBMpair}) explicitly and thus we illustrate the results numerically. We choose the parameters
$\mu = 0.05,
\sigma = 0.2,
r = 0.15,
\lambda=2,
\gamma_u = 5,
\gamma_d = 4,
\delta = 0.3.
$ Then, as expected, the optimal thresholds converge to the ones in the singular case as $\lambda \to \infty$, see Figure \ref{Fig:Lambdalim}.
\begin{figure}[ht]
    \centering
    \includegraphics[width=\linewidth]{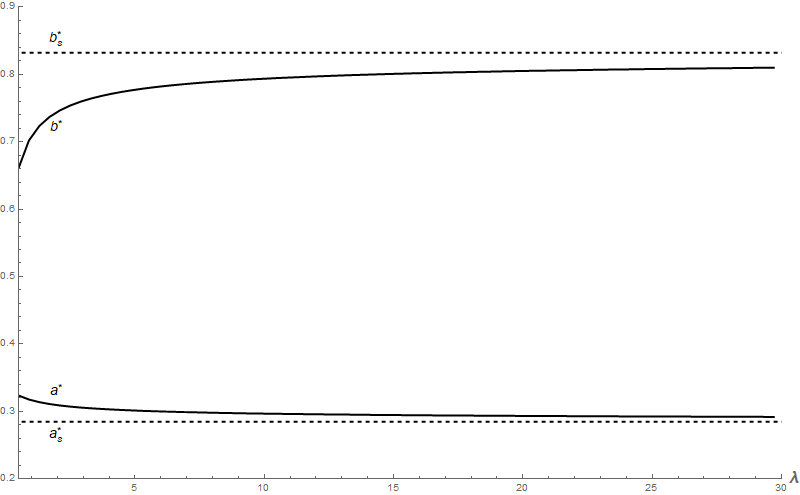}
    \caption{The optimal boundaries $a^*_s,$ $b^*_s$, $a^*$ and $b^*$ when parameters $\mu = 0.05,\sigma = 0.2,r = 0.15,\gamma_u = 5,\gamma_d = 4,\delta = 0.3.$ and $\lambda$ increases.}
    \label{Fig:Lambdalim}
\end{figure}
Also, at least in our numerical examples, increasing volatility expands the continuation region (by increasing $b$ and decreasing $a$), see table \ref{Tab:Incuncer}.
\begin{table}[ht]
    \centering
    \begin{tabular}{|c|c|c|c|}
             \hline & $\sigma = 0.2, \lambda=2$ & $\sigma = 0.8, \lambda=2$ & $\sigma = 0.8, \lambda=20$ \\ \hline
         $a^*$ & $0.309$  & $0.188$ & $0.151$   \\ \hline
         $b^*$ & $0.745$ & $0.938$ & $1.246$    \\ \hline
    \end{tabular}
    \caption{The optimal boundaries $a^*$ and $b^*$ when parameters $\mu = 0.05,r = 0.15,\gamma_u = 5,\gamma_d = 4$ and $\delta = 0.3.$}
    \label{Tab:Incuncer}
\end{table}
This is in line with the findings for the singular control case. These observations show that on one hand increased uncertainty (in terms of decreasing signal rate $\lambda$) shrinks the inactivity region by hastening the usage of control policies, but on the other hand increased uncertainty (in terms of increasing volatility $\sigma$) expands the inactivity region.

Finally, the value function of the problem is shown in Figure \ref{Fig:Valuefunc}.
\begin{figure}[ht]
    \centering
    \includegraphics[width=\linewidth]{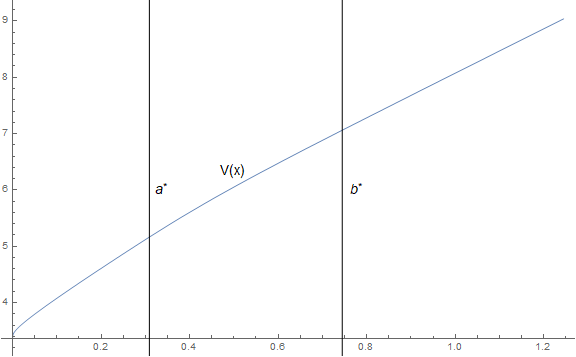}
    \caption{The value function and the optimal boundaries $a^*$ and $b^*$ with parameters $\mu = 0.05,\sigma = 0.2,r = 0.15,\lambda=2,\gamma_u = 5,\gamma_d = 4,\delta = 0.3.$}
    \label{Fig:Valuefunc}
\end{figure}


\begin{thebibliography}{99}


\bibitem{Alvarez2001} 
{\sc Alvarez, L. H. R.} (2001). Singular Stochastic Control, Linear Diffusions, and Optimal Stopping: A Class of Solvable Problems, {\em SIAM J. Control Optim.}
{\em 39,} 1697--1710.

\bibitem{Alvarez2003}
{\sc Alvarez, L. H. R.} (2003). On the properties of r-excessive mappings for a class of diffusions, {\em Ann. Appl. Probab.}
{\em 13,} 1517--1533.

\bibitem{Alvarez2004} 
{\sc Alvarez, L. H. R.} (2004). A Class of Solvable Impulse Control Problems, {\em Appl. Math. Optimization}
{\em 49,} 265--295.

\bibitem{AlvarezLempa2008} 
{\sc Alvarez, L. H. R. and Lempa, J.} (2008). On The Optimal Stochastic Impulse Control of Linear Diffusions. {\em SIAM J. Control Optim.}
{\bf 47,} 703--732.

\bibitem{AlvarezHening2019} 
{\sc Alvarez, L. H. R. and Hening, A.} (2019). Optimal sustainable harvesting of populations in random environments.  To appear in {\em Stochastic Processes and their Applications}.

\bibitem{BorodinSalminen} 
{\sc Borodin, A. N. and Salminen, P.} (2015). {\em Handbook of Brownian Motion - Facts and Formulae}, 2nd~edn. Birkhäuser, Basel.

\bibitem{ChiarollaHaussman2005} 
{\sc Chiarolla M.B. and Haussmann U.G.} (2005). Explicit solution of a stochastic irreversible investment problem and its moving threshold. {\em Math. Oper. Res.}
{\bf 30,} 91--108.

\bibitem{Dixit1990} 
{\sc Dixit, A. K. and Pindyck, R. S.} (1994). {\em Investment under uncertainty}, Princeton UP, Princeton. 

\bibitem{DupuisWang2002} 
{\sc Dupuis, P. and Wang, H.} (2002). Optimal Stopping with Random Intervention Times. {\em Adv. Appl. Prob.} 
{\bf 34,} 141--157.  

\bibitem{GuoPham2005} 
{\sc Guo X. and Pham H.} (2005). Optimal partially reversible investment with entry decision and general production function. {\em  Stochast. Process. Appl.} 
{\bf 115,} 705--736.  

\bibitem{HarrisonSellkeTaylor1983} 
{\sc Harrison, J. M., Sellke, T. M. and Taylor, A. J.} (1983). Impulse Control of Brownian Motion. {\em Math. Oper. Res.}
{\bf 8,} 454--466.

\bibitem{HobsonZeng2019} 
{\sc Hobson D. and Zeng M.} (2019). Constrained optimal stopping, liquidity and effort. {\em To appear in Stochastic Processes and Applications.} {\em preprint, arXiv:2008.01787.}

\bibitem{Hobson2021} 
{\sc Hobson D.} (2021). The shape of the value function under Poisson optimal stopping. {\em Stochast. Process. and Appl.}
{\bf 133,} 229--246.  


\bibitem{KaratzasShreve1991} 
{\sc Karatzas, I. and Shreve, S. E.} (1991). {\em Brownian Motion and Stochastic Calculus}. Springer, New York.

\bibitem{Lange2020} 
{\sc Lange, R.-J., Ralph D. and Støre K.} (2020). Real-option valuation in multiple dimensions using Poisson optional stopping times. {\em J. Finan. Quant. Anal.}
{\bf 55,} 653--677.

\bibitem{Lempa2010} 
{\sc Lempa, J.} (2010). A note on optimal stopping of diffusions with a two-sided optimal rule. {\em Oper. Res. Lett.}
{\bf 38,} 11--16.

\bibitem{Lempa2012} 
{\sc Lempa, J.} (2012). Optimal Stopping with Information Constraint. {\em Appl. Math. Optimization}
{\bf 66,} 147--173.

\bibitem{Lempa2014} 
{\sc Lempa, J.} (2014). Bounded Variation Control of Itô Diffusion with Exogenously Restricted Intervention Times. {\em Adv. Appl. Prob.}
{\bf 46,} 102--120.

\bibitem{LempaSaarinen2021} 
{\sc Lempa, J. and Saarinen, H.} (2021). Ergodic Control of Diffusions with Random Intervention Times. {\em J. Appl. Probab.}.
{\bf 58,} 1--21.

\bibitem{LiangWei2016}
Liang, G. and W. Wei (2016). Optimal Switching at Poisson Random Intervention Times. {\em Discrete and Continuous Dynamical Systems-Series B.} 
{\bf 21,} 1483--1505.

\bibitem{LiangSun2019}
{\sc Liang, G. and Sun H.} (2019). Dynkin games with Poisson Random Intervention Times. {\em SIAM Journal on Control and Optimization.}
{\bf 57,} 2962-–2991.

\bibitem{LiangSun2020}
{\sc Liang, G. and Sun H.} (2020). Risk-Sensitive Dynkin Games with Heterogeneous Poisson Random Intervention Times. {\em preprint, arXiv:2008.01787.} 

\bibitem{LunguOksendal1997}
{\sc Liang, G. and Sun H.} (2020). Optimal harvesting from a population in a stochastic crowded environment. {\em Math. Biosci.}
{\bf 145,} 47-–75

\bibitem{Matsumoto2006} 
{\sc Matsumoto, K.} (2006). ptimal Portfolio of Low Liquid Assets with a Log-utility Function. {\em Finance Stochastics.}
{\bf 10,} 121--145.

\bibitem{Matomaki2012} 
{\sc Matomäki P.} (2012). On Solvability of a Two-sided Singular control Problem. {\em Math. Oper. Res.}
{\bf 76,} 239--271.

\bibitem{MenaldiRobin2016} 
{\sc Menaldi, J. L. and Robin, M.} (2016). On Some Optimal Stopping Problems with Constraint. {\em SIAM J. Control Optim.} 
{\bf 54,} 2650--2671.

\bibitem{MenaldiRobin2017} 
{\sc Menaldi, J. L. and Robin, M.} (2017). On Some Impulse Control Problems with Constraint. {\em SIAM J. Control Optim.}
{\bf 55,} 3204--3225.

\bibitem{MenaldiRobin2018} 
{\sc Menaldi, J. L. and Robin, M.} (2018). On Some Ergodic Impulse Control Problems with Constraint. {\em SIAM J. Control Optim.}
{\bf 56,} 2690--2711.

\bibitem{Oksendal2000} 
{\sc Øksendal A.} (2000). Irreversible investment problems. {\em Finance Stochastics.}
{\bf 4,} 223--250.


\bibitem{RogersZane2002} 
{\sc Rogers, L. C. G. and Zane, O.} (2002). A Simple Model of Liquidity Effects. In {\em Advances in Finance and Stochastics}, 161--176.  Springer-Verlag, Berlin Heidelberg.



\bibitem{Wang2001} 
{\sc Wang, H.} (2001). Some Control Problems with Random Intervention Times. {\em Adv. Appl. Prob.}
{\bf 33,} 404--422.

\end{thebibliography}
\end{document}